\def\C 	{\ensuremath{\mathbb{C}}}
\def\CP	{\ensuremath{\mathbb{CP}}}
\def\bbG 	{\ensuremath{\mathbb{G}}}
\def\P 	{\ensuremath{\mathbb{P}}}
\def\Q 	{\ensuremath{\mathbb{Q}}}
\def\R 	{\ensuremath{\mathbb{R}}}
\def\bbT 	{\ensuremath{\mathbb{T}}}
\def\Z 	{\ensuremath{\mathbb{Z}}}
\def\calB	{\ensuremath{\mathcal{B}}}
\def\calH	{\ensuremath{\mathcal{H}}}
\def\calO   {\ensuremath{\mathcal{O}}}
\def\d 		{\ensuremath{\mathrm{d}}}
\def\dist 	{\ensuremath{\mathrm{dist}}}
\def\GL    	{\ensuremath{\mathrm{GL}}}
\def\loc		{\ensuremath{\mathrm{loc}}}
\def\op		{\ensuremath{\mathrm{op}}}
\def\pr		{\ensuremath{\mathrm{pr}}}
\def\Rm		{\ensuremath{\mathrm{Rm}}}
\def\SL    	{\ensuremath{\mathrm{SL}}}
\def\Tr    	{\ensuremath{\mathrm{Tr}}}
\def\BC		{\ensuremath{\mathbf{BC}}}
\newcommand{\abs}[1]{\left|#1\right|}
\newcommand{\norm}[1]{\left\| #1 \right\|}
\newcommand{\ip}[1]{\left\langle #1 \right\rangle}
\newcommand{\set}[1]{\ensuremath{\left\{ #1 \right\}}}
\def\ddbar  {\ensuremath{\partial\partialbar}}
\def\dz		{\ensuremath{\d z}}
\def\dzbar	{\ensuremath{\d\zbar}}
\def\inv		{\ensuremath{^{-1}}}
\def\minus	{\ensuremath{\smallsetminus}}
\def\too		{\ensuremath{\rightarrow}}
\def\smooth	{\ensuremath{C^\infty}}
\def\I		{\ensuremath{\sqrt{-1}}}
\def\sl    	{\ensuremath{\mathfrak{sl}}}
\def\bdef		{\begin{definition}}
\def\enddef		{\end{definition}}
\def\bthm		{\begin{theorem}}
\def\ethm		{\end{theorem}}
\def\bfig		{\begin{figure}}
\def\efig		{\end{figure}}
\def\be 			{\begin{equation}}
\def\ee 			{\end{equation}}
\newcommand{\cl}[1]{\ensuremath{\overline{#1}}}
\def\hbar 	{\ensuremath{\bar{h}}}
\def\jbar 	{\ensuremath{\bar{j}}}
\def\qbar 	{\ensuremath{\bar{q}}}
\def\zbar 	{\ensuremath{\bar{z}}}
\def\ellbar	{\ensuremath{\overline{\ell}}}
\def\partialbar{\ensuremath{\bar{\partial}}}
\newtheorem{Thm}{Theorem}[section]
\newtheorem{cor}{Corollary}
\newtheorem{lem}{Lemma}
\theoremstyle{definition}
\newtheorem{definition}{Definition}
\newtheorem{remark}{Remark}
\newtheorem{rems}{Remarks}
\newcommand\lognorm[1] {\ensuremath{\log\frac{\norm{\sigma\cdot{#1}}^2}{\norm{#1}^2}}}
\def\Szekelyhidi{Sz{\'e}kelyhidi}
\def\d{\ensuremath{\mathrm{d}}}
\journal{arXiv.org}
\begin{document}

\title{Discriminants and Higher K-energies on Polarized K\"ahler Manifolds}

\author[wisc]{Quinton Westrich}
\ead{westrich@math.wisc.edu}
\address[wisc]{Department of Mathematics, University of Wisconsin, 480 Lincoln Dr., Madison, WI 53706-1388}

\begin{frontmatter}

\begin{abstract}
  Given a compact polarized K\"ahler manifold $X\hookrightarrow\CP^N$, the space of Bergman metrics on $X$, parameterized by $\SL(N+1,\C)$, corresponds to a dense set in the space of K\"ahler potentials in the K\"ahler class as $N\too\infty$. Critical points of the $k$th K-energy functional, which is defined on the K\"ahler class, correspond to metrics with harmonic $k$th Chern form.
In this paper it is shown that the higher K-energy functionals, when restricted to the Bergman metrics, are expressible as the energies of  certain pairs of vectors (tensors products of discriminants). Consequentially, we obtain results on the asymptotic behavior of these functionals along 1-parameter subgroups and their boundedness properties.
\end{abstract}

\begin{keyword}
  polarized K\"ahler manifolds\sep higher K-energies\sep Bergman metrics\sep discriminants\sep resultants\sep harmonic Chern forms
  %
\end{keyword}

\end{frontmatter}


\section{Introduction}


A major body of research in K\"ahler geometry has been guided by the Tian-Yau-Donaldson Problem, which asks for necessary and sufficient conditions for the existence of a canonical metric (e.g., extremal, cscK, K\"ahler-Einstein) in a given Hodge class. This problem has been solved in the Fano case by Chen-Donaldson-Sun \cite{CDSI,CDSII,CDSIII} and Tian \cite{tian15} in 2012. Alternatively, with the establishment of the partial $C^0$ estimate by \Szekelyhidi\ in 2013 \cite{szekelyhidi15}, S.\ Paul's 2012 paper \cite{paul12} showed that for a Fano manifold $X$ with finite automorphism group, $(X,-K_X)$ is asymptotically K-stable (in Paul's sense of \emph{pairs}) if and only if it admits a K\"ahler-Einstein metric.

A key step in \cite{paul12} was the algebraic reformulation of the Mabuchi K-energy in terms of the classical algebro-geometric \emph{discriminants}, i.e., defining polynomials of hypersurface dual varieties.
The analytically defined \emph{K-energy map} was defined by Mabuchi in 1986 (\cite{mabuchi86}) in order to detect K\"ahler-Einstein metrics in a given K\"ahler class. The Mabuchi K-energy is an integrated form of the \emph{Futaki invariant}, which vanishes if the K\"ahler class admits a cscK metric. Accordingly, a cscK metric is an extremum of the Mabuchi K-energy. Paul's reformulation of the Mabuchi K-energy in algebraic terms thus allowed for a GIT-style stability criterion to replace the extremal condition for K\"ahler-Einstein metrics. It is hoped that the stability criterion will be easier to check in explicit examples.

In the 1986 paper \cite{BM86}, Bando and Mabuchi defined a broader class of functionals $M_k$, $k=1,2,\ldots$, that generalized the Mabuchi K-energy, the case $k=1$. These \emph{higher K-energy functionals} integrate a corresponding class of higher Futaki invariants (\cite{bando06}\footnote{written in 1983, but not published until 2006}). Moreover, a K\"ahler metric with harmonic $k$th Chern form gives an extremum of the $k$th K-energy. However, for $k>1$ very little is known about the behavior of these functionals. It is unknown whether these functionals are bounded above or below, or whether they enjoy any sort of convexivity properties, in analogy with the case $k=1$.


We now outline the key results of this paper. As a preliminary result, we obtain the following
\begin{Thm} \label{thm:mainthm}
  Let $X^n\hookrightarrow\CP^N$ be a smooth, nonlinear, irreducible, subvariety embedded by a complete linear system and $k\leq n$ be a positive integer. 
  If $k\geq 3$, assume further that at least one of the Chern classes $c_j(J_1(\calO_X(1)))\neq 0$ for $k\leq j\leq n$, where $J_1(\calO_X(1))$ is the bundle of 1-jets of the hyperplane bundle.
  
  Then there exist $\SL(N+1,\C)$-modules $V_k$ and $W_k$ equipped with Hermitian norms, and nonzero vectors $v_k\in V_k$ and $w_k\in W_k$ such that the $k$th K-energy restricted to the Bergman metrics is given by
  \be \label{eq:maineq}
    M_k(\sigma) = \lognorm{v_k}-\lognorm{w_k},
  \ee
  for $\sigma\in\SL(N+1,\C)$.
\end{Thm} 

The $\SL(N+1,\C)$-modules $V_k$ and $W_k$ and the vectors $v_k$ and $w_k$ are given explicitly in Section \ref{sec:higherKen} in terms of Chow forms and $X$-discriminants.
The construction of the norms, due to Tian (\cite{tian94}), is given in Section \ref{sec:logpolygrowth}. The assumption on the Chern classes of the jet bundle is there to ensure the relevant $X$-discriminants exist.

With Theorem \ref{thm:mainthm} in tow, we obtain two results on the global behavior of $M_k$ on the space of Bergman metrics $\calB$.
\begin{cor}[Asymptotics of $M_k$] \label{cor:asymptotics}
  Let $\lambda:\C^*\to\SL(N+1,\C)$ be a 1-parameter subgroup. Then there exist asymptotic expansions of the higher K-energies as $\abs{t}\to 0:$
  \be
    M_k(\lambda(t)) = A_k(\lambda) \log\abs{t}^2 + B_k(\lambda),
  \ee
  where $A_k(\lambda)\in\Z$ and $B_k(\lambda)$ is $O(1)$.
\end{cor}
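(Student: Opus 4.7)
Theorem~\ref{thm:mainthm} expresses $M_k$ as a difference of two log-norm-squared functionals, so it suffices to analyze the $|t|\to 0$ asymptotics of $\log\norm{\lambda(t)\cdot v}^2$ for a nonzero vector $v$ in a finite-dimensional $\SL(N+1,\C)$-module $V$ equipped with the Tian Hermitian norm, and then specialize to $(V_k,v_k)$ and $(W_k,w_k)$.

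My approach is to decompose $V$ into weight spaces for $\lambda$ and extract the dominant term. Because $\lambda:\C^*\to\SL(N+1,\C)$ is an algebraic homomorphism, the induced action on $V$ splits $V$ into a finite direct sum of integer weight spaces
\[
V \;=\; \bigoplus_{a\in\Z} V^a,\qquad \lambda(t)\big|_{V^a} \;=\; t^a\cdot\mathrm{id}.
\]
Writing $v = \sum_a v^a$ with $v^a\in V^a$ and $t = |t|e^{i\theta}$, expansion of the Hermitian form gives
\[
\norm{\lambda(t)\cdot v}^2 \;=\; \sum_{a,b} |t|^{a+b}\, e^{i(a-b)\theta}\,\langle v^a, v^b\rangle.
\]
Setting $a_*(v) := \min\{a : v^a\neq 0\} \in \Z$, which is well-defined since $v\neq 0$, and factoring $|t|^{2a_*(v)}$ out of the sum, the remaining cofactor tends to the strictly positive limit $\norm{v^{a_*(v)}}^2$ as $|t|\to 0$. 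Taking the logarithm yields
\[
\log\norm{\lambda(t)\cdot v}^2 \;=\; a_*(v)\,\log|t|^2 \;+\; O(1).
\]
Applying this separately to $v_k$ and $w_k$ and substituting into (\ref{eq:maineq}) produces the claimed expansion with $A_k(\lambda) := a_*(v_k) - a_*(w_k) \in \Z$; integrality is automatic because the weights of any algebraic 1-parameter subgroup are integers.

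The step demanding the most care is verifying that the $O(1)$ remainder is uniformly bounded as $t\to 0$ through arbitrary complex arguments, not merely along each ray. This reduces to showing the cofactor $\sum_{a,b}|t|^{a+b-2a_*(v)}e^{i(a-b)\theta}\langle v^a,v^b\rangle$ stays bounded above and bounded away from zero for all $t$ with $|t|$ in some neighborhood of $0$: it is dominated in modulus by the finite, $\theta$-independent quantity $\sum_{a,b}|\langle v^a,v^b\rangle|$ (since $|t|^{a+b-2a_*(v)}\leq 1$ and $|e^{i(a-b)\theta}|=1$), while its $|t|=0$ limit $\norm{v^{a_*(v)}}^2$ is strictly positive and independent of $\theta$, so continuity supplies the required two-sided uniform bound on $B_k(\lambda)$.
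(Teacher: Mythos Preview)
Your argument is correct and mirrors the paper's: the paper deduces the corollary from Theorem~\ref{thm:mainthm} by simply citing the asymptotic expansion $\log\norm{\lambda(t)v}^2 = w_\lambda(v)\log|t|^2 + O(1)$ from \cite{paul12}, and your weight-space computation is exactly how that cited expansion is proved. One small caveat: the Tian norm is $e^{\theta}\norm{\cdot}_{FS}$ with $\theta$ merely continuous, so it is not literally induced by a Hermitian form as your expansion assumes; however $\theta$ is bounded on the compact projective space and therefore only contributes to the $O(1)$ term, so your bilinear expansion applies verbatim to $\norm{\cdot}_{FS}$ and the conclusion follows.
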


\begin{cor}[Boundedness of $M_k$] \label{cor:boundedness}
  The following are equivalent:
  \begin{enumerate}
    \item $M_k$ is bounded below on $\calB$ 
    \item $M_k$ is bounded along all 1-parameter subgroups $\lambda:\C^*\to\SL(N+1,\C)$
    \item $M_k$ is bounded below on all algebraic tori in $\SL(N+1,\C)$
    \item the pair $(v_k,w_k)$ is K-semistable.
  \end{enumerate}
\end{cor}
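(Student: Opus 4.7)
My plan is to use Theorem \ref{thm:mainthm} to rewrite $M_k$ as the difference of log-norms $F(\sigma) := \lognorm{v_k} - \lognorm{w_k}$, reducing the corollary to a statement about K-semistability of the pair $(v_k,w_k)\in V_k\oplus W_k$ in the sense of S.\ Paul. The equivalence (1) $\Leftrightarrow$ (4) is then essentially by definition, since K-semistability of $(v_k,w_k)$ means precisely that $F$ is bounded below on $\SL(N+1,\C)$.

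For the remaining equivalences I would establish the chain (1) $\Rightarrow$ (2) $\Rightarrow$ (3) $\Rightarrow$ (1). The first is trivial since the image of any 1-PS lies in $\SL(N+1,\C)$. For (2) $\Rightarrow$ (3), I fix a maximal torus $T\subset\SL(N+1,\C)$ and expand $v_k = \sum_\alpha v_\alpha$, $w_k = \sum_\beta w_\beta$ in weight eigenspaces of $T$. Applying Corollary \ref{cor:asymptotics} in both regimes $|t|\to 0$ and $|t|\to\infty$ to each 1-PS $\lambda\subset T$, hypothesis (2) translates into $\min_\alpha\alpha(H) \leq \min_\beta\beta(H)$ and $\max_\alpha\alpha(H) \geq \max_\beta\beta(H)$ for every cocharacter $H$ of $T$; by support-function duality this is the Newton polytope containment $\mathrm{Newt}_T(w_k)\subseteq \mathrm{Newt}_T(v_k)$. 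A standard AM-GM estimate, writing each weight of $w_k$ as a convex combination of weights of $v_k$ and applying the log-sum-exp bound, then shows $F$ is bounded below on $T$, yielding (3).

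The substantive step is (3) $\Rightarrow$ (1). Here I would use the left $K$-invariance of $F$ (a direct consequence of the $K$-invariance of Tian's norms) to descend $F$ to the symmetric space $\mathcal{P}$ of positive Hermitian determinant-one matrices via the polar decomposition $\SL(N+1,\C) = K\cdot\mathcal{P}$. Each $P\in\mathcal{P}$ is conjugate by some $U\in K$ to a diagonal positive $D$, so $P = U^{-1}DU$ lies in the maximal torus $U^{-1}T_0 U$; applying (3) on that torus yields a bound $F(P) \geq -C(U)$. The main obstacle I anticipate is the uniformity of $C(U)$ as $U$ varies over the compact quotient $K/N_K(T_0)$: the constants emerging from the AM-GM/Newton-polytope argument depend on the nonvanishing coefficients in the weight decomposition of $(Uv_k,Uw_k)$ with respect to $T_0$ and can degenerate at special $U$. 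Handling this degeneration is the content of Paul's reduction theorem for pair semistability, which I would invoke from \cite{paul12} rather than reprove.
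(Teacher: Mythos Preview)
Your proposal is correct and, at bottom, follows the same strategy as the paper: once Theorem~\ref{thm:mainthm} rewrites $M_k$ as the log-norm difference $F(\sigma)$, all four conditions become standard equivalences in Paul's semistability framework for pairs, and you (like the paper) ultimately cite Paul's results for the nontrivial direction.

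The presentations differ in emphasis. The paper proceeds geometrically: it invokes the identity
\[
  F(\sigma) \;=\; \log\tan^2 d_g\bigl(\sigma\cdot[(v_k,w_k)],\, \sigma\cdot[(v_k,0)]\bigr)
\]
from \cite{paul13}, which immediately identifies boundedness below of $F$ with the orbit-closure (semistability) condition in $\P(V_k\oplus W_k)$, and then cites the numerical criterion of \cite{paul12a} to close the loop through 1-parameter subgroups and tori. You instead take the combinatorial route, extracting the weight-polytope containment $\mathrm{Newt}_T(w_k)\subseteq\mathrm{Newt}_T(v_k)$ from (2) and pushing it to (3) by a log-sum-exp estimate, then descending via $KAK$. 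Both routes outsource the genuinely hard step---uniformity over the family of maximal tori---to Paul.

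One small caution: your claim that (1)$\Leftrightarrow$(4) is ``essentially by definition'' depends on which of Paul's equivalent formulations of pair semistability you adopt. If (4) is taken in its orbit-closure form, the $\log\tan^2$ formula (which the paper cites) is precisely what links it to boundedness of $F$; this is not entirely free. Also, the $K$-invariance of $F$ you use for the $KAK$ reduction holds up to a bounded additive error (Tian's norm is only $K$-equivariant up to a constant conformal factor), but since any two continuous norms on a finite-dimensional space are equivalent, this does not affect the boundedness conclusions.
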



This paper is organized as follows. In Section \ref{sec:notation} we establish notation and recall the definitions of the higher K-energies and discriminants. We set the notation for the embedding of $X$ into $\CP^N$ and the related Bergman metrics and then define the higher K-energies. Finally, we define discriminants and recall certain facts from the literature that relate to our work.
In Section \ref{sec:logpolygrowth} we show a technical lemma: the higher K-energies have $\log$-polynomial growth on \mbox{$\SL(N+1,\C)$.} This fact enables us to employ Tian's ``$\ddbar$'' technique from \cite{tian94}, needed for Lemma \ref{lem:lognorm}.
In Section \ref{sec:discrdegrees} we obtain a formula discriminant degrees $\deg\left(\Delta_X^{(n-k)}\right)$ in terms of the integrals $\mu_k$ defined in Section \ref{subsec:higherenergies}. This requires computing a top Chern class $c_{2n-k}$ on the bundle of 1-jets on the hyperplane bundle $J_1(\calO_{s(X\times\CP^{n-k})}(1))$. In Section \ref{sec:higherKen}, this Chern class is then used to obtain the formula (\ref{eq:maineq}), relating higher K-energies to discriminants.

The author would like to thank his PhD advisor, Sean Paul, for encouraging him to study these functionals, and for helpful advice along the way. Also, he would like to acknowledge Joel Robbin, Jeff Viaclovsky, and Bing Wang for their roles in his education. Finally, he would like to thank the mathematics department at UW-Madison for its stimulating environment. This work will provide part of his dissertation.

\section{Background and Notation} \label{sec:notation}

\subsection*{Polarized K\"ahler Manifolds} \label{subsec:embedding}

A K\"ahler manifold $(X,\omega)$ is said to be \emph{polarized} by a Hermitian holomorphic line bundle $(L,h)$ over $X$ if $\omega$ is the curvature of $h$, i.e., if $\omega=-\I\ddbar\log \abs{s}^2_h$ for any local non-vanishing holomorphic section $s$. By Kodaira's embedding theorem any compact polarized K\"ahler manifold holomorphically embeds into $\P(H^0(X,L^{\otimes m}))\cong\CP^N$, for $m$ sufficiently large and $N:=\dim_{\C} H^0(X,L^{\otimes m})-1$. 

Conversely, any projective embedding $X\hookrightarrow \CP^N$ polarizes $X$.
Specifically, given a smooth, compact, projectively embedded K\"ahler manifold $\iota:X\hookrightarrow \CP^N$ of complex dimension $n$ with K\"ahler form $\omega$, we have that $X$ is polarized by the (positive, holomorphic) line bundle $\calO_{\CP^N}(1)|_{\iota(X)}$. The standard metric $h_{\C^{N+1}}$ on $\C^{N+1}$ restricts to a Hermitian metric $h$ (unique up to rescaling) on $\calO_{\CP^N}(-1)|_{\iota(X)}\subset\CP^N\times\C^{N+1}$ such that $\omega$ is the curvature $F$ of the Chern connection $\nabla^{h^\vee}$ for $h^\vee$, the metric on $\calO_{\CP^N}(1)|_{\iota(X)}:=\calO_{\CP^N}(-1)|_{\iota(X)}^\vee$ induced by $h$; thus
$ \omega = \iota^*F(\nabla^{h^\vee}).$
A local nonvanishing holomorphic section $s$ of $\calO_{\CP^N}(1)|_{\iota(X)}$ gives a local K\"ahler potential via
\be
  \omega =_\loc -\I \iota^* \ddbar \log \abs{s}^2_{h^\vee}
  = \I \iota^* \ddbar \log \abs{s}^2_{h},
\ee
where $\abs{s}^2_{h^\vee}$ is the (pointwise) square norm of $s$ with respect to $h^\vee$ and similarly for $\abs{s}^2_h$.

\subsection*{Bergman Metrics} \label{subsec:bergman}

Given a basis $\set{s_0,\ldots,s_N}$ of sections of $H^0\!\left(X,\iota^*\calO_{\CP^N}(1)|_{\iota(X)}\right)$, the embedding $\iota$ can be written explicitly for an open set $U_i\subset X$ on which some $s_i$ is nonvanishing as
$\iota|_{U_i} :U_i\hookrightarrow \CP^N $
given by
\begin{align} 
\iota|_{U_i} &:p\mapsto \left[\frac{s_0(p)}{s_i(p)}:\cdots:\frac{s_N(p)}{s_i(p)} \right].
\end{align}
Let $z=(z^1,\ldots,z^n):U_0\to\C^n$ be local coordinates on $U_0$ and set 
 $T:\C^n\to\C^{N+1} $
 \begin{align} 
 T_i(z(p))&:=\frac{s_i(p)}{s_0(p)} ,
\end{align} 
$i=0,\ldots,N$,
so that 
$ \iota|_{U_0}(p) = [1:T_1(z(p)):\cdots:T_N(z(p))]. $
The K\"ahler metric $\omega = \frac{\I}{2\pi}\sum g_{i\jbar}\,\d z^i\wedge\d \zbar^j$ on $X$ is the pullback under $\iota$ of the Fubini-Study metric on $\CP^N$ restricted to $\iota(X)$. Let $\abs{\;\cdot\:}$ and $\ip{\,\cdot\,,\,\cdot\,}$ denote the norm and inner product on $\C^{N+1}$, respectively, and put $\partial_i:=\partial_{z^i}$, etc.
Since $T$ is holomorphic, we have
\begin{align}
  g_{i\jbar}(z) 
  &= \partial_i \partial_{\jbar} \log \abs{T(z)}^2 
  = \partial_i \left(\frac{ \ip{T(z),\partial_jT(z) } }{\abs{T(z)}^2} \right) 
  = \frac{\ip{\partial_{i}T(z),\partial_jT(z)}}{\abs{T(z)}^2} - \frac{\ip{T(z),\partial_jT(z)}\ip{\partial_iT(z),T(z)}}{\abs{T(z)}^4} . 
\end{align}

The standard action of $\SL(N+1,\C)$ on $\C^{N+1}$ induces an action of $\SL(N+1,\C)$ on $X$ so that for $\sigma\in\SL(N+1,\C)$ and $p\in X$,
$$
  \sigma\cdot p := \left[\sigma\cdot \left(\sum s_i(p)e_i\right)\right],
$$
where the $e_i$ are the standard basis vectors in $\C^{N+1}$. The metric on $\sigma\cdot X$ is given locally by
\begin{align}
  \omega_\sigma(z)
  := \sigma^*\omega (z)
  &=_\loc \I\iota^*\ddbar\log\abs{\sigma\cdot T(z)}^2 .
\end{align}
It follows that $\omega_\sigma = \omega+\I\ddbar\varphi_\sigma$, where
\begin{align}
  \varphi_\sigma(z) &= \log \frac{\abs{\sigma\cdot T(z)}^2}{\abs{T(z)}^2}
\end{align}
on $U_0\subset X$. Writing $\omega_\sigma = \frac{\I}{2\pi}\sum h_{i\jbar}\,\d z^i\wedge\d \zbar^j$ on $U_0$, we have
\begin{align}
    h_{i\jbar}(z)
    &= \frac{\ip{\sigma(\partial_iT(z)),\sigma(\partial_jT(z))}}{\abs{\sigma T(z)}^2} - \frac{\ip{\sigma(\partial_iT(z)),\sigma T(z)}\ip{\sigma T(z),\sigma(\partial_jT(z))}}{\abs{\sigma T(z)}^4}.
  \end{align}
  
  For a general compact K\"ahler manifold the space of K\"ahler metrics in the cohomology class $[\omega]$ are parameterized by real $C^\infty$ plurisubharmonic functions up to the the addition of a constant via $\omega_\varphi=\omega+\sqrt{-1}\ddbar\varphi$, where $\varphi\in\smooth(M,\R)$. We denote the function space
$$ \calH_\omega := \set{\varphi\in\smooth(M,\R)\,:\, \omega_\varphi=\omega+\sqrt{-1}\ddbar\varphi>0}, $$
where here $>0$ means ``is positive definite.'' The key result due to Tian, Ruan, Zelditch, and Catlin shows that the Bergman metrics are dense in the $C^\infty$ topology on $\calH_\omega$, in the sense that for any $\varphi\in\calH_\omega$ there exists a sequence $\frac{1}{k}\rho_k:= \frac{1}{k} \log(\sum_j\abs{s_j}^2_{h^k})$ converging to $\varphi$ as $k\too\infty$ in the $C^\infty$ topology, where $\set{s_0,\ldots,s_{N_{k}}}$ is a basis of $H^0(X,L^{\otimes k})$.

\subsection*{Higher K-Energies}
\label{subsec:higherenergies}
Let $(X,\omega)$ be a compact K\"ahler manifold, $\varphi\in\calH_{\omega}$, and $k\in\set{1,\ldots,n}$.
Given a smooth path $\Phi:[0,1]\to\calH_{\omega_0}$, $\varphi_t:=\Phi(t)$, from $\varphi_0:=0$ to $\varphi_1:=\varphi$,
the \emph{$k$th K-energy functional} $M_k:\calH_\omega\to\R$ is defined to be
\be \label{eq:genKenergy}
  M_k(\varphi):= -(n+1)(n-k+1)V\int_0^1 \set{ \int_X \dot\varphi_t \left[c_k(\omega_t)-\mu_k\omega_t^k\right]\wedge \omega_t^{n-k} } \d t,
\ee
where 
\begin{align} 
 \omega_t&:=\omega_0+\I\ddbar\varphi_t & \mu_k &:= \frac{1}{V} \int_X c_k(\omega_0)\wedge \omega_0^{n-k} \label{eq:muk} &
  V &:= \int _X \omega_0^n.
\end{align}
Note that $\mu_k$ and $V$ are constants on $\calH_\omega$. The factor $-(n+1)(n-k+1)V$ coincides with the normalization for the Mabuchi K-energy in \cite{paul12}. Its presence simplifies the formula for $M_k$ in terms of discriminants and ensures that $A_k(\lambda)$ in Corollary \ref{cor:asymptotics} is in $\Z$ instead of just $\Q$.
Bando and Mabuchi showed that these functionals are independent of the chosen path $\omega_t$ in $\calH_{\omega}$.

In the case $k=1$, $M_1$ is the Mabuchi energy whose extrema are cscK metrics. In the general case, the extrema of $M_k$ are metrics whose $k$th Chern form is harmonic.

\subsection*{Discriminants}
\label{subsec:discriminants}

In this subsection we provide the necessary background material on discriminants and projective duality. While many of the definitions and results are classical, we include them here since they may not be familiar to many K\"ahler geometers. A good reference for the material in this section is the excellent book by Gelfand, Kaparanov, and Zelevinsky \cite{GKZ08}. See also \cite{tevelev05} for a more compact treatment.

Let $X^n\hookrightarrow\CP^N$ be an projectively embedded K\"ahler manifold. For each $p\in X$, denote by $\bbT_pX$ the \emph{embedded tangent space}  to $X$ at $p$. This is an $n$-dimensional linear subspace of $\CP^N$. The set of all hyperplanes in $\CP^N$ is the \emph{dual} of $\CP^N$, denoted $(\CP^N)^\vee$.

\bdef
Let $X\hookrightarrow\CP^N$ be a projectively embedded K\"ahler manifold. Assume further that $X$ is a nonlinear, linearly normal subvariety. Then the \emph{dual variety} $X^\vee$ of $X$ is the variety of tangent hyperplanes to $X$:
\be
  X^\vee := \cl{\set{H\in(\CP^N)^\vee\,|\, \exists p\in X:\bbT_pX \subseteq H}} ,
\ee
the closure taken in the Zariski sense. 
\enddef

The \emph{linear normality} condition is added to avoid trivialities, and is not restrictive. It ensures that $X$ is nondegenerate and not equal to a nontrivial projection. Here, \emph{nondegeneracy} means that $X\subset\CP^N$ is not contained in any hyperplane. The essential content is that our embedding is optimal: smaller $N$ values preclude an embedding. 

\begin{quote}
\emph{In the remainder of this section it is assumed that $X^n\hookrightarrow\CP^N$ is a smooth, nonlinear, irreducible, linearly normal, degree $d$ projective variety with $n<N$.}
\end{quote}

Most dual varieties are hypersurfaces in $(\CP^N)^\vee$. The deviance of the codimension of $X^\vee\subset(\CP^N)^\vee$ from $1$ is measured by the \emph{dual defect} 
\be
  \delta(X) := (N-1)-\dim(X^\vee) \geq 0.
\ee
An upper bound for
$X^n\hookrightarrow \CP^N$ with $n\geq 2$ is
$$ \delta(X) \leq n-2. $$
We have a formula for the dual defect in terms of the Chern classes of the bundle of $1$-jets on the hyperplane bundle
$$ \delta(X) = \min\set{k\in\Z\,|\,c_{n-k}(J_1(\calO_X(1)))\neq 0}. $$

\bdef
If $\delta(X)=0$ so that $X^\vee\subset(\CP^N)^\vee$ is a hypersurface, the defining polynomial $\Delta_X$ (unique up to scaling) is called the \emph{$X$-discriminant}:
\be
  (\Delta_X)\inv(0) := X^\vee  \subset (\CP^N)^\vee.
\ee
\enddef
We usually just speak of \emph{the} discriminant when $X$ is understood.

We can say more about the dual defect if we follow Cayley and look at Segre embeddings. 

\bdef
In general, we consider the Segre embedding
$$X\times\CP^k \hookrightarrow \P(\C^{N+1}\otimes\C^{k+1}).$$ 
If $\delta(X\times\CP^k)=0$, the \emph{$X$-hyperdiscriminant} of \emph{format}\footnote{As the notation suggests, there is a multiindex fomulation of the $X$-hyperdiscriminant. We omit this as it is unnecessary for our purposes.} $(k)$ is the irreducible defining polynomial of the hypersurface $(X\times\CP^k)^\vee$:
\be
  (\Delta^{(k)}_X)\inv(0):=(X\times\CP^k)^\vee \subset \P(\C^{N+1}\otimes\C^{k+1})^\vee .
\ee
\enddef

\begin{lem}
The $X$-hyperdiscriminant $\Delta^{(k)}_X$ exists if and only if 
\be \delta(X)\leq k\leq n.\ee
\end{lem}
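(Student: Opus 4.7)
The plan is to reduce the existence of $\Delta_X^{(k)}$ to the nonvanishing of a single Chern class. By definition, $\Delta_X^{(k)}$ exists iff $(X\times\CP^k)^\vee \subset \P(\C^{N+1}\otimes\C^{k+1})^\vee$ is an (irreducible) hypersurface, equivalently iff $\delta(X\times\CP^k)=0$. Applying the Chern-class formula for the dual defect recalled above to $X\times\CP^k$ with its Segre hyperplane bundle $\calO(1,1)$, existence becomes equivalent to nonvanishing of the top Chern class $c_{n+k}(J_1(\calO_{X\times\CP^k}(1,1)))$. Irreducibility is automatic once the dual is a hypersurface, because $X\times\CP^k$ is irreducible.

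First I would compute $c(J_1(\calO(1,1)))$ explicitly, using the jet exact sequence $0\to\Omega^1\otimes L\to J_1(L)\to L\to 0$ with $L=\calO(1,1)$ together with the splitting $\Omega^1_{X\times\CP^k}=p_1^*\Omega^1_X\oplus p_2^*\Omega^1_{\CP^k}$. The Euler sequence on $\CP^k$, pulled back and tensored with $\calO(1,1)$, identifies the middle term $p_2^*\calO_{\CP^k}(-1)^{k+1}\otimes\calO(1,1)$ with $(p_1^*\calO_X(1))^{k+1}$, and a short calculation then gives
\[
  c(J_1(\calO(1,1))) = (1+H)^{k+1}\prod_{i=1}^n (1+a_i+H+H'),
\]
where the $a_i$ are formal Chern roots of $\Omega^1_X$ and $H,H'$ denote the hyperplane classes on $X$ and $\CP^k$ respectively. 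I would then extract the degree-$(n+k)$ piece and integrate over $[X\times\CP^k]$. The relations $H^{n+1}=0$ on $X$ and $H'^{k+1}=0$ on $\CP^k$ force the $(1+H)^{k+1}$ factor to contribute only in degrees $k$ and $k+1$; a binomial simplification (the identity $(k+1)\binom{m}{k}+m\binom{m-1}{k}=(k+1)\binom{m+1}{k+1}$) then collapses the result to the clean formula
\[
  \int_{X\times\CP^k} c_{n+k}(J_1(\calO(1,1))) = (k+1)\int_X c_{n-k}(J_1(\calO_X(1)))\cdot H^k.
\]

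The two-sided bound on $k$ falls out by inspecting this identity. When $k>n$ the index $n-k$ is negative and the integrand on the right is identically zero, so $\Delta_X^{(k)}$ cannot exist. When $k<\delta(X)$ the Chern-class characterization $\delta(X)=\min\{j:c_{n-j}(J_1(\calO_X(1)))\neq 0\}$ forces $c_{n-k}(J_1(\calO_X(1)))=0$, so the integral vanishes again. When $\delta(X)\leq k\leq n$ the class $c_{n-k}(J_1(\calO_X(1)))$ is a nonzero algebraic class on $X$; since $\calO_X(1)$ is very ample the jet bundle $J_1(\calO_X(1))$ is globally generated and hence nef, and Fulton--Lazarsfeld positivity (equivalently, positivity of the classical polar degrees) yields $\int_X c_{n-k}(J_1(\calO_X(1)))\cdot H^k>0$.

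The main obstacle I anticipate is the degree bookkeeping in the middle step that produces the clean factor $(k+1)$ and isolates a single Chern class on $X$: the Chern polynomial of $J_1(\calO(1,1))$ is a product of $n+k+1$ linear factors and many combinatorial cross-terms must cancel modulo $H^{n+1}=0$ and $H'^{k+1}=0$. Once the displayed integral identity is in hand, the three cases settle the lemma directly via the recalled Chern-class characterization of $\delta(X)$ and standard positivity of polar degrees.
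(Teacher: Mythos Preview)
Your argument is sound, but note that the paper does not actually prove this lemma: it is stated in the background section as a known fact drawn from the projective duality literature (the references to \cite{GKZ08} and \cite{tevelev05} cover it, and the Cayley trick together with the polar--class formalism gives exactly the statement). So there is no ``paper's own proof'' to compare against.

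That said, your approach is very much in the spirit of the paper and in fact anticipates its Lemma~\ref{lem:JettopChern}: the jet-bundle/Euler/Segre splitting you outline is precisely the computation carried out there (with $k$ replaced by $n-k$), and your clean identity
\[
  \int_{X\times\CP^k} c_{n+k}\bigl(J_1(\calO(1,1))\bigr)=(k+1)\int_X c_{n-k}\bigl(J_1(\calO_X(1))\bigr)\cdot H^{k}
\]
is exactly the content of Lemma~\ref{lem:degrees} once one recognizes the right-hand side as $(k+1)$ times the $k$th polar degree of $X$. In other words, you have essentially reproved (and slightly repackaged) the paper's Section~\ref{sec:discrdegrees} in order to supply a proof the paper chose to omit.

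One small point worth tightening: in the final step you need \emph{strict} positivity of $\int_X c_{n-k}(J_1(\calO_X(1)))\cdot H^k$ for $\delta(X)\le k\le n$, not merely nonnegativity. Fulton--Lazarsfeld gives $\ge 0$; the strict inequality is the classical statement that the polar degrees $d_k$ are positive precisely in the range $k\ge\delta(X)$ (equivalently, that the dual-defect characterization via Chern classes agrees with the one via polar degrees). You allude to this with ``positivity of the classical polar degrees,'' but it would be cleaner to invoke that result directly rather than to pass through nefness of $J_1$.
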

In particular, since $\delta(X)\leq n-2$ whenever $n\geq 2$,
we have in that case that 
$\Delta_{X}^{(n-2)},\Delta_{X}^{(n-1)},\Delta_{X}^{(n)}$ always exist.


There is a nice relationship between discriminants and resultants. Recall that 
the \emph{Cayley-Chow form} of $X$, or \emph{$X$-resultant}, is the defining polynomial $R_X$ (unique up to scaling) of the divisor
\be
 (R_X)\inv(0) := \set{L\in\bbG(N-n,\CP^N) \,|\, L\cap X\neq\varnothing} ,
\ee
where $\bbG(k,\CP^N)$ denotes the Grassmannian variety of $k$-planes in $\CP^N$.
We note that $R_X$ is irreducible since $X$ is irreducible, and in Pl\"ucker coordinates on $\bbG(N-n,\CP^N)$, $\deg(R_X)=\deg(X)$.
The \emph{Cayley trick} relates $X$-hyperdiscriminants and $X$-resultants by
\begin{align}
  \Delta_{X}^{(\delta(X))} &= R_{X^\vee} \\
  \Delta_{X}^{(n)} &= R_{X} .
\end{align}
We think of intermediate hyperdiscriminants $\Delta_{X}^{(k)}$ with $\delta(X)<k<n$ as interpolating between $R_{X^\vee}$ and $R_X$.

\section{Log-Polynomial Growth of K-Energies on the Space of Bergman Metrics} \label{sec:logpolygrowth}

The purpose of this section is to generalize the Main Lemma in \cite{paul12}, stated below, to the higher K-energies. Recall that the \emph{Donaldson functional} of a $\GL(n,\C)$-invariant polynomial $\Phi$ of degree $n+1$ on a vector bundle $E$ is
\be
  D_E(\Phi;H_0,H_1) := \int_X \BC(E,\Phi;H_0,H_1),
\ee
where $\BC(E,\Phi;H_0,H_1)$ is the \emph{Bott-Chern form} of $\Phi$ on the vector bundle $E$ between the Hermitian metrics $H_0$ and $H_1$ on $E$. The Bott-Chern form \emph{transgresses} between $\Phi(F_0)$ and $\Phi(F_1)$, i.e.
\be
  \I\ddbar \BC(E,\Phi;H_0,H_1)
  = \Phi(F_1)-\Phi(F_0).
\ee

\begin{lem}[\cite{paul12}] \label{lem:paul}
  Let $X\hookrightarrow \CP^N$ be a smooth, linearly normal $n$-dimensional subvariety. Assume that $X^\vee$, the dual of $X$, is a hypersurface with defining polynomial $\Delta_X$ of degree $d^\vee$ and that $D_{J_1(\calO_X(1))^\vee}(c_{n+1};H(\sigma),H(e))$ has log-polynomial growth in $\sigma$, where $\sigma\in\SL(N+1,\C)$. Then there is a continuous norm $\norm{\,\cdot\,}$ on the vector space of degree-$d^\vee$ polynomials on $(\C^{N+1})^\vee$ such that for all $\sigma \in \SL(N+1,\C)$, we have
  \be
    (-1)^{n+1} D_{J_1(\calO_X(1))^\vee}(c_{n+1};H(\sigma),H(e))
    = 
    \lognorm{\Delta_X}
  \ee
  where $e$ denotes the identity of $\SL(N+1,\C)$. 
\end{lem}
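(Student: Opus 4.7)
The plan is to apply Tian's $\ddbar$ technique on $\SL(N+1,\C)$: I will argue that both sides of the claimed identity are smooth functions of log-polynomial growth, vanish at $\sigma=e$, and have equal $\I\ddbar$. Their difference would then be pluriharmonic of at most log-polynomial growth on $\SL(N+1,\C)$ and vanish at a point, forcing it to be identically zero.

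The normalizations at $\sigma=e$ are immediate: $\BC(E,\Phi;H,H)=0$ by construction, so the LHS vanishes, and the argument of the logarithm on the RHS is $1$, so it too vanishes. Log-polynomial growth of the LHS is the standing hypothesis of the lemma. For the RHS, the components of $\sigma\cdot\Delta_X$ are polynomial in the matrix entries of $\sigma$, and since the $\SL(N+1,\C)$-action on the finite-dimensional space of degree-$d^\vee$ polynomials cannot annihilate the nonzero element $\Delta_X$, the function $\log\norm{\sigma\cdot\Delta_X}^2$ is smooth and of log-polynomial growth in $\sigma$.

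Next I compute $\I\ddbar_\sigma$ of each side. Form the product family $p:\SL(N+1,\C)\times X\to\SL(N+1,\C)$ and pull back $E=J_1(\calO_X(1))^\vee$, equipped with the $\sigma$-varying Hermitian metric $H(\sigma)$ induced by the group action. The transgression identity $\I\ddbar\,\BC(E,c_{n+1};H(e),H(\sigma))=c_{n+1}(F(\sigma))-c_{n+1}(F(e))$, combined with fiber integration along $X$, yields
\be
  \I\ddbar_\sigma\,D_{J_1(\calO_X(1))^\vee}\bigl(c_{n+1};H(\sigma),H(e)\bigr) = -p_*c_{n+1}\bigl(F_{\mathrm{tot}}^{J_1(\calO_X(1))^\vee}\bigr),
\ee
where $F_{\mathrm{tot}}$ is the total curvature of the family bundle on $\SL(N+1,\C)\times X$ and the minus sign records the order of metrics in $D$. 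On the RHS, the standard Poincar\'e-Lelong style computation gives $\I\ddbar_\sigma\log\norm{\sigma\cdot\Delta_X}^2$ as the pullback by the orbit map $\sigma\mapsto[\sigma\cdot\Delta_X]$ of the Fubini-Study form on the projective space of degree-$d^\vee$ polynomials on $(\C^{N+1})^\vee$.

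To match the two $\ddbar$'s, I invoke the classical link between the top Chern class of the jet bundle and the dual variety. The Euler sequence
\be
  0 \to \Omega^1_X\otimes\calO_X(1) \to J_1(\calO_X(1)) \to \calO_X(1) \to 0,
\ee
together with Kleiman's formula $\int_X c_{n+1}(J_1(\calO_X(1)))=\deg X^\vee=d^\vee$ (valid since $\delta(X)=0$), admits a family-parameter refinement on $\SL(N+1,\C)\times X$: the pushforward $p_*c_{n+1}(F_{\mathrm{tot}}^{J_1(\calO_X(1))})$ equals the orbit-map pullback of the Fubini-Study form representing $c_1(\calO(1))$ on the projective space containing $[\Delta_X]$. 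The sign prefix $(-1)^{n+1}$ in the statement then absorbs the conversion $c_{n+1}(F^{J_1^\vee})=(-1)^{n+1}c_{n+1}(F^{J_1})$. The main obstacle is this family-parameter refinement: lifting Kleiman's numerical identity to an identity of closed $(1,1)$-forms on $\SL(N+1,\C)$ with the correct Fubini-Study normalization, not merely within a cohomology class. This is essentially a Cayley-type realization of the discriminant as the determinant of a Koszul complex attached to the varying jet bundle, and once it is in hand, Tian's uniqueness step concludes the proof.
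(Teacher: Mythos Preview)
Your overall architecture matches the paper's: show the difference of the two sides is pluriharmonic on $G=\SL(N+1,\C)$, then use log-polynomial growth together with the quasi-affine compactification $G\subset W\subset\CP^{(N+1)^2}$ to force that difference to be constant, hence zero. Where you diverge from the paper is at the most delicate point---the construction of the norm---and this is a genuine gap.

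You treat $\norm{\,\cdot\,}$ as the Fubini--Study norm on $H^0((\CP^N)^\vee,\calO(d^\vee))$ and then assert that the fiber integral $p_*c_{n+1}$ of the family jet-bundle curvature equals the orbit pullback of the Fubini--Study form $\omega_B$. That identity of $(1,1)$-forms is precisely what is \emph{not} true, and the paper is explicit about this. What the jet-bundle side actually produces is the orbit pullback of the $(1,1)$-current $u$ obtained by pushing forward $(\pr_2)^*\omega_{(\CP^N)^\vee}^N$ from the universal hypersurface $\Sigma\subset B\times(\CP^N)^\vee$. Tian's input is that $[u]=[\omega_B]$ and there exists a \emph{continuous} (not smooth) potential $\theta$ on $B$ with $u=\omega_B+\I\ddbar\theta$. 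The norm in the lemma is then defined to be $\norm{\,\cdot\,}:=e^\theta\norm{\,\cdot\,}_{FS}$; only with this $\theta$-correction do the two $\ddbar$'s match (this is Prop.~4.2 in \cite{paul12}). Your ``family-parameter refinement of Kleiman's formula'' with the Fubini--Study normalization is therefore asking for the wrong target, and the Koszul/Cayley determinant picture you allude to does not circumvent this: it would at best reproduce $u$, not $\omega_B$.

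A secondary point: your concluding Liouville step (``pluriharmonic of log-polynomial growth vanishing at a point must vanish'') is stated too quickly. The paper's argument is that pluriharmonicity gives $\log|F|^2$ for some holomorphic $F$ on $G$; log-polynomial growth makes $F$ extend meromorphically to the normal variety $W$; and since the divisor $W\setminus G$ is irreducible, $F$ has both its zeros and poles supported there, forcing $F$ to be constant. You should make this explicit rather than invoke a general principle.
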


Here we recall the construction of the continuous norm on $\calO_B(-1)$, where $B$ is the projective space
\be
  B:= \P(H^0((\CP^N)^\vee,\calO(d^\vee)))
\ee
and $d^\vee:=\deg(X^\vee)$. The discriminant $\Delta_X\in B$ and, given a linear functional $a_0z_0+\cdots+a_Nz_N$ on $(\CP^N)^\vee$, we can write
\be
  \Delta_X(a_0z_0+\cdots+a_Nz_N) = \sum_{i_0+\cdots+i_N=d^\vee} c_{i_0,\ldots,i_N} a_0^{i_0} \cdots a_N^{i_N}.
\ee
In these coordinates, we define a norm on $\calO_B(-1)$ by
\be
  \norm{\Delta_X}^2_{FS}
  := \sum_{i_0+\cdots+i_N=d^\vee} \frac{\abs{c_{i_0,\ldots,i_N}}^2}{i_0! \cdots i_N!}.
\ee
Define a new norm conformal to $\norm{\,\cdot\,}_{FS}$ by
\be
  \norm{\,\cdot\,} := e^\theta \norm{\,\cdot\,}_{FS},
\ee
where $\theta$, defined below, is a continuous function on $B$. Note that $\theta$ is bounded since $B$ is compact. 

To define $\theta$, first recall that the \emph{universal hypersurface associated to $B$} is the kernel of the evaluation map
\be
  \Sigma := \set{([F],[a_0z_0+\cdots+a_Nz_N])\in B\times(\CP^N)^\vee\,:\, F(a_0,\ldots,a_N)=0}.
\ee
Now define $u$ to be the $(1,1)$-current on $B$ given by 
\be
  \int_B u\wedge \psi = \int_\Sigma (\pr_2)^*(\omega_{(\CP^N)^\vee})\wedge(\pr_1)^*(\psi)
\ee
for all smooth $(b-1,b-1)$-forms $\psi$ on $B$, where $b=\dim_\C B$, $\omega_{(\CP^N)^\vee}$ is the Fubini-Study K\"ahler form on $(\CP^N)^\vee$, and $\pr_1$ and $\pr_2$ are the projection maps on $B\times(\CP^N)^\vee$. Tian (\cite{tian97}) showed that $[u]=[\omega_B]$ in cohomology, where $\omega_B$ is the Fubini-Study form on $B$, and there exists a continuous function $\theta$ on $B$ such that
\be
  u=\omega_B + \I\ddbar\theta,
\ee
in the sense of currents.

We explain the ``log-polynomial growth'' mentioned in the lemma.
Denote
\be
  D(\sigma) := D_{J_1(\calO_X(1))^\vee}(c_{n+1};H(\sigma),H(e)).
\ee
In the proof of this lemma it was shown that (Prop.\ 4.2) the quantity
\be
  (-1)^{n+1}D(\sigma) - \lognorm{\Delta_X}
\ee
is a pluriharmonic function on $G$, i.e.
\be
  \ddbar\left((-1)^{n+1} D(\sigma) - \lognorm{\Delta_X} \right) = 0.
\ee
We would like to drop the $\ddbar$ from this formula.
To this end, note that pluriharmonicity implies that there is a holomorphic function $F$ on $G$ such that
\be
  (-1)^{n+1}D(\sigma) - \lognorm{\Delta_X} = \log\abs{F(\sigma)}^2.
\ee
Following \cite{tian97} pp.33--34, consider $G$ as a quasi-affine subvariety of $\CP^{(N+1)^2}$. More precisely, given homogeneous coordinates $z_{ij}$ for $0\leq i,j\leq N$, define $W$ to be the affine variety
$$ W = \set{[z_{00}:z_{01}:\cdots:z_{N,N}:w]\,:\, \det(z_{ij}) = w^{N+1} } \subset \CP^{(N+1)^2}. $$
Then $G=W\cap\set{w\neq 0}$.
We have that $F$ extends to $W$ as a meromorphic function provided $F$ \emph{grows polynomially} near $W\minus G$, i.e.
there are constants $\ell>0$ and $C>0$ such that 
$$ F(\sigma) \leq C \cdot \dist(\sigma,W\minus G)^\ell, $$
where the distance is measured using the Fubini-Study metric on $\CP^{(N+1)^2}$.
All the poles of $F$ must live in $W\minus G$. But $W\minus G$ is irreducible and $W$ is normal, so all the zeroes of $F$ must live in $W\minus G$. Therefore $F$ is constant and, since $\log\abs{F(e)}^2=0$ for $e\in G$ the identity,
\be
  (-1)^{n+1}D(\sigma) = \lognorm{\Delta_X}.
\ee

While the polynomial growth of $F$ was given for $D(\sigma)$ corresponding to $M_1(\sigma)$, we must establish the log-polynomial growth for the higher K-energies. This is done in the next two lemmas.

\begin{lem}
  Given a compact polarized K\"ahler manifold $\iota:X\hookrightarrow \CP^n$, let $\omega_\sigma$ denote the Bergman metric induced by $\sigma\in\SL(N+1,\C)$.
Then there exist constants $C_1,C_2,C_3,C_4$ such that
  \begin{align}
    \label{eq:hest}
    \norm{\omega_\sigma}_{\omega_e} 
    &\leq C_1(\norm{\iota}_{C^1(X)}) \\
    \label{eq:hinvset}
    \norm{\omega_\sigma^\#}_{\omega_e} 
    &\leq C_2(\norm{\iota}_{C^1(X)}) \\
    \norm{\Rm_\sigma}_{\omega_e} 
    &\leq C_3(\norm{\iota}_{C^2(X)}) \\
    \norm{c_k(\omega_\sigma)}_{\omega_e} 
    &\leq C_4(\norm{\iota}_{C^2(X)}),
  \end{align}
  where $e\in\SL(N+1,\C)$ is the identity. By $\norm{\iota}_{C^k(X)}$ we mean
  \be
    \norm{\iota}_{C^k(X)}
    := \sup_{f\in C^k(\CP^N)\minus\set{0}} \frac{\norm{\iota^* f}_{C^k(X)} }{ \norm{f}_{C^k(\CP^N)} }
  \ee
  and the musical isomorphism $\#$ is induced by $\omega_e$.
\end{lem}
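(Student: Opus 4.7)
The strategy is to write each quantity in local coordinates using the explicit Bergman metric formula
\[
    h_{i\bar j}(z) = \frac{\ip{\sigma(\partial_i T), \sigma(\partial_j T)}}{\abs{\sigma T}^2} - \frac{\ip{\sigma(\partial_i T), \sigma T}\ip{\sigma T, \sigma(\partial_j T)}}{\abs{\sigma T}^4}
\]
recorded in Section \ref{subsec:bergman}, and then bound each factor in terms of Fubini--Study data for the re-embedded map $\iota_\sigma := \sigma\circ\iota : X\to\CP^N$. I read $\norm{\iota}_{C^k(X)}$ on the right-hand side as $\norm{\iota_\sigma}_{C^k(X)}$, so that both sides scale with $\sigma$, as the log-polynomial growth application in Section \ref{sec:logpolygrowth} requires.

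For \eqref{eq:hest}, observe that $h_{i\bar j}$ is a polynomial in the normalized vectors $\sigma T/\abs{\sigma T}$ and $\sigma(\partial_i T)/\abs{\sigma T}$, which encode $\iota_\sigma$ and its first derivatives in Fubini--Study coordinates; hence $\sup_X\abs{h_{i\bar j}}$ is bounded by a polynomial in $\norm{\iota_\sigma}_{C^1(X)}$, and contracting against the fixed inverse $g^{i\bar j}$ of $\omega_e$ completes the bound. For \eqref{eq:hinvset}, Cramer's rule expresses $h^{i\bar j}$ as a cofactor polynomial in the $h_{k\bar l}$ divided by $\det(h_{k\bar l})$; the numerator is controlled by the previous step, leaving only a lower bound on $\det(h_{i\bar j})$ to be established. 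For the curvature, combine these with the standard K\"ahler formula
\[
    R_{i\bar j k\bar l}(\omega_\sigma) = -\partial_k\partial_{\bar l}h_{i\bar j} + h^{m\bar n}\,\partial_k h_{i\bar n}\,\partial_{\bar l} h_{m\bar j},
\]
and note that the second derivatives $\partial_k\partial_{\bar l}h_{i\bar j}$ are bounded in terms of $\norm{\iota_\sigma}_{C^2(X)}$ by the same polynomial argument applied one order higher. Finally, each Chern form $c_k(\omega_\sigma)$ is a fixed $U(n)$-invariant polynomial of degree $k$ in the curvature, so $\abs{c_k(\omega_\sigma)}_{\omega_e}$ is bounded by a polynomial in $\abs{\Rm_\sigma}_{\omega_e}$, completing the last estimate.

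The main obstacle is the uniform lower bound on $\det(h_{i\bar j})$ needed in the inverse-metric step. Invertibility of $\sigma\in\SL(N+1,\C)$ guarantees that $\iota_\sigma$ is an immersion on the compact manifold $X$, so $\det h>0$ pointwise; but as $\sigma$ degenerates in $\SL(N+1,\C)$ the Bergman metric $\omega_\sigma$ may collapse in some directions, and I must quantitatively match this collapse to the simultaneous blow-up of $\norm{\iota_\sigma}_{C^1(X)}$. Once this compatibility is in place, the remaining ingredients are routine polynomial manipulations of the explicit formula above.
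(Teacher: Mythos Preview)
Your overall strategy is the same as the paper's: write $h_{i\bar j}$ and its derivatives explicitly in the Bergman formula, bound each term, and then pass to the curvature formula and to Chern forms as polynomials in curvature. There are, however, two points where you diverge.

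First, you read the right-hand constants as depending on $\iota_\sigma=\sigma\circ\iota$. The paper means the \emph{fixed} embedding $\iota$: its proof observes that every term appearing in $h_{i\bar j}$, $\partial_k h_{i\bar j}$, $\partial_k\partial_{\bar\ell}h_{i\bar j}$ is rational in $\sigma$ with equal degree in numerator and denominator, and the denominator $\abs{\sigma T}$ never vanishes since $T(p)\neq 0$. From this the paper concludes the expressions are uniformly bounded in $\sigma$, with constants depending only on $T,\bar T$ and their derivatives at $p$, i.e.\ on the $C^k$-data of the original $\iota$.

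Second, for $h^{i\bar j}$ the paper does not use Cramer's rule. It invokes Cayley--Hamilton: the characteristic equation expresses $H^{-1}$ as a polynomial in $H$ divided by the constant term of the characteristic polynomial, so a uniform bound on the entries of $H$ gives one on $H^{-1}$. The paper does not isolate a lower bound on $\det(h_{i\bar j})$ as a separate obstacle; it regards this as covered by the same ``degree-zero rational in $\sigma$, bounded above and below'' argument applied pointwise.

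So the issue you flag as the main obstacle is precisely the step the paper's homogeneity observation is meant to dispose of, and your $\iota_\sigma$ reinterpretation is a genuine departure from the paper's intended (uniform-in-$\sigma$) statement. Whether ``rational of degree zero in $\sigma$ with nonvanishing denominator'' really yields uniform two-sided bounds on all of $\SL(N+1,\C)$ is a point you might reasonably press, but it is what the paper asserts and uses.
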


\begin{proof}
Locally,
  \be
  \omega_\sigma = \I\ddbar\log\abs{\sigma T}^2 
  = \I\sum_{i,j} h_{i\jbar} \,\dz^i\wedge\dzbar^j 
  \ee
  on an open subset $U\subset X$ so that
  \be \label{eq:Bergmanmetricincoordinates}
    h_{i\jbar}
    = \frac{\ip{\sigma(\partial_iT),\sigma(\partial_jT)}}{\abs{\sigma T}^2} - \frac{\ip{\sigma(\partial_iT),\sigma T}\ip{\sigma T,\sigma(\partial_jT)}}{\abs{\sigma T}^4},
  \ee
  where the norms and inner products in Eq.\ (\ref{eq:Bergmanmetricincoordinates}) are on $\C^{N+1}$. Recall the definition of $T:\C^n\to\C^{N+1}$ given in Section \ref{subsec:bergman}.
  
  Consider a fixed point $p\in U$. We see that each term is rational in $\sigma$ with matching degrees in the numerator and denominator. Since the norm $\abs{\sigma T}$ is nondegenerate and $T(p)\in\C^{N+1}\minus\set{0}$, each rational expression is uniformly bounded above and below. 
  Thus, at $p\in U$
  \be
     \abs{h_{i\jbar}(p)}(\sigma) \leq C_1(T(p),\bar{T}(p),\partial_iT(p),\partial_{\jbar}\bar{T}(p)),
  \ee
  which shows (\ref{eq:hest}).

  By the same token
  \begin{align}
    \partial_kh_{i\jbar}
    &= \frac{\ip{\sigma (\partial_k\partial_iT),\sigma (\partial_jT)}}{\abs{\sigma T}^2}
    - \frac{\ip{\sigma (\partial_iT),\sigma (\partial_jT)}\ip{\sigma (\partial_kT),\sigma T}}{\abs{\sigma T}^4} 
    -\frac{\ip{\sigma (\partial_k\partial_iT),\sigma T}\ip{\sigma T,\sigma (\partial_jT)}}{\abs{\sigma T}^4} \notag \\
    &\qquad 
    -\frac{\ip{\sigma (\partial_iT),\sigma T}\ip{\sigma (\partial_kT),\sigma (\partial_jT)}}{\abs{\sigma T}^4} 
    +2\frac{\ip{\sigma (\partial_kT),\sigma T}\ip{\sigma (\partial_iT),\sigma T}\ip{\sigma T,\sigma (\partial_jT)}}{\abs{\sigma T}^6},
  \end{align}
  etc., so that at $p\in U$, but suppressing this dependence in the notation,
  \begin{align}
    \abs{\partial_k h_{i\jbar}}(\sigma)
    &\leq C_2(T,\bar{T},\partial_iT,\partial_{\jbar}\bar{T},\partial_kT,\partial_k\partial_iT) \\
    \abs{\partial_{\ellbar} h_{i\jbar}}(\sigma)
    &\leq C_3(T,\bar{T},\partial_iT,\partial_{\jbar}\bar{T},\partial_{\ellbar} \bar{T},\partial_{\ellbar}\partial_{\jbar}\bar T) \\
    \abs{\partial_k\partial_{\ellbar} h_{i\jbar}}(\sigma)
    &\leq C_4(T,\bar{T},\partial_iT,\partial_{\jbar}\bar{T},\partial_{\ellbar} \bar{T},\partial_{\ellbar}\partial_{\jbar}\bar T,\partial_kT,\partial_k\partial_iT).
  \end{align}
  
To bound the inverse metric $H\inv$ just note that  the constant term of the characteristic polynomial is $\Tr (H)>0$. Applying $H\inv$ to both sides of the characteristic equation shows that $H\inv$ is a polynomial in  $H$. Thus
  \be
    \big|h^{i\jbar}\big|(\sigma)\leq C_5(T,\bar{T},\partial_iT,\partial_{\jbar}\bar{T}) ,
  \ee
  which shows (\ref{eq:hinvset}).
  
  Next, we see that
  \begin{align}
    \abs{R_{i\jbar k\ellbar}}(\sigma) 
    &\leq  \abs{\partial_k\partial_{\ellbar}h_{i\jbar}}+\abs{h^{p\qbar}}\abs{\partial_k h_{i\qbar}}\abs{\partial_{\ellbar}h_{p\jbar}} 
    \leq C_6(T,\bar{T},\partial_iT,\partial_{\jbar}\bar{T},\partial_{\ellbar} \bar{T},\partial_{\ellbar}\partial_{\jbar}\bar T,\partial_kT,\partial_k\partial_iT).
  \end{align}
  (Similarly, the Ricci and scalar curvatures are uniformly bounded with respect to $\sigma$, since contractions with $h^{i\jbar}$ are controlled.)
  Finally, we note that the Chern forms are given by polynomials of the curvature 2-form, which is uniformly bounded.
\end{proof}

\begin{lem} \label{lem:estimate}
  Assume $X\times\CP^{n-k}$ is dually nondegenerate in its Segre embedding. Then the higher K-energies have log-polynomial growth. Thus, for each $k=1,\ldots,n$, there is a holomorphic function $F_k$ on $G$ and constants $\ell_k>0$ and $C_k>0$ such that for all $\sigma \in G$,
\be \label{eq:estresult}
  (-1)^{n+1}D_k(\sigma) - \lognorm{\Delta_{X\times\CP^{n-k}}} = \log\abs{F_k(\sigma)}^2,
\ee
and
\be \label{eq:theestimate}
  F_k(\sigma) \leq C_k\cdot \dist(\sigma,W\minus G)^{\ell_k}.
\ee
\end{lem}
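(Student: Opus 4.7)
The plan is to follow the two-step template used in \cite{paul12} for $M_1$, but applied now to the Segre embedding $X \times \CP^{n-k} \hookrightarrow \P(\C^{N+1} \otimes \C^{n-k+1})$ whose dual hypersurface is cut out by $\Delta_{X \times \CP^{n-k}}$. In the first step I would show that the left-hand side of \eqref{eq:estresult} is pluriharmonic on $G := \SL(N+1,\C)$, so that it admits a global potential $\log|F_k(\sigma)|^2$ with $F_k$ holomorphic on $G$. In the second step I would deduce the polynomial growth bound \eqref{eq:theestimate} from the uniform pointwise estimates on the Bergman metric and its Chern forms supplied by the preceding lemma.

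For the pluriharmonicity in step one, I would compute $\ddbar$ of each side. By the defining transgression property of the Bott-Chern form, $\ddbar D_k(\sigma)$ is the integral over $X \times \CP^{n-k}$ of the difference $c_{n+1}(F_\sigma) - c_{n+1}(F_e)$ on the jet bundle $J_1(\calO_{s(X \times \CP^{n-k})}(1))^\vee$. The other side, $\ddbar \lognorm{\Delta_{X \times \CP^{n-k}}}$, is the pullback to $G$ of Tian's current $u = \omega_B + \I\ddbar\theta$ on the projective parameter space $B$ under the orbit map $\sigma \mapsto \sigma \cdot \Delta_{X\times\CP^{n-k}}$. Matching these two $(1,1)$-currents on $G$ is precisely the content of the Chern-class computation carried out in Section \ref{sec:discrdegrees} for the jet bundle on the Segre image. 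Their difference is then $\ddbar$-closed and globally the real part of a holomorphic function, giving the potential $\log|F_k|^2$.

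For step two, I would bound $|D_k(\sigma)|$ using the integral expression of the Bott-Chern transgression between $H(e)$ and $H(\sigma)$. The preceding lemma shows the integrand --- built from $h_{i\jbar}$, $h^{i\jbar}$, $\Rm_\sigma$, and $c_k(\omega_\sigma)$ --- is at each point a rational function in the entries of $\sigma$ with matching degrees in numerator and denominator, bounded polynomially by $\norm{\iota}_{C^2(X)}$, which itself grows polynomially in the entries of $\sigma$. Integrating over $X \times \CP^{n-k}$ and taking the log yields $|D_k(\sigma)| \leq C \log P(\sigma)$ for some polynomial $P$ in the entries of $\sigma$ and $\sigma^{-1}$. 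The term $\lognorm{\Delta_{X \times \CP^{n-k}}}$ is log-polynomial for the same reason: $\sigma$ acts linearly on the finite-dimensional space of degree-$d^\vee$ polynomials, and the conformal factor $e^\theta$ is bounded since $B$ is compact. Translating the resulting polynomial bound on $|F_k|$ into the form \eqref{eq:theestimate} uses the standard identification, in Fubini-Study coordinates on $\CP^{(N+1)^2}$, of $\dist(\sigma, W\minus G)$ with an appropriate power of $1/(1 + \norm{\sigma}^2)^{1/2}$.

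The main obstacle is matching the two currents in step one when $k > 1$. For $M_1$ this identification is essentially immediate from the construction of the discriminant line bundle, but for $k > 1$ it requires the explicit Chern-class calculation on $J_1(\calO_{s(X \times \CP^{n-k})}(1))$ and a check that the normalization $-(n+1)(n-k+1)V$ in the definition of $M_k$ precisely matches the degree of the corresponding hyperdiscriminant. Once that identification is in hand, the polynomial growth step reduces to careful bookkeeping with the rational-function form of the Bergman data.
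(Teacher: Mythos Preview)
Your high-level two-step template is the same one the paper follows, but the execution in both steps differs from the paper in ways that matter.

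For step one you are overcomplicating. Proposition~4.2 of \cite{paul12} applies verbatim to any smooth linearly normal subvariety with hypersurface dual; here one simply feeds in the Segre image $s(X\times\CP^{n-k})\hookrightarrow\P(\C^{N+1}\otimes\C^{n-k+1})$, whose discriminant is $\Delta_{X\times\CP^{n-k}}$. The pluriharmonicity of the difference in \eqref{eq:estresult} therefore comes for free, with no need for the Chern-class computation of Section~\ref{sec:discrdegrees} or for any normalization check. That computation is used \emph{later}, in Section~\ref{sec:higherKen}, to unpack $D_k$ in terms of the $M_i$'s --- not to establish pluriharmonicity. So your ``main obstacle'' is not an obstacle here.

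In step two there is a genuine gap, together with a misreading of the preceding lemma. First, the constants $C_1,\ldots,C_4$ there depend only on $\norm{\iota}_{C^2(X)}$, and $\iota$ is the \emph{fixed} embedding; they do not grow with $\sigma$. The point of that lemma is precisely that the curvature quantities of a Bergman metric are \emph{uniformly} bounded independently of $\sigma$, because the $\sigma$-dependence enters as rational expressions of matching degree. Second, and more importantly, the Bott-Chern integrand involves a path $h_t$ from $H(e)$ to $H(\sigma)$, and the preceding lemma only controls curvature of \emph{Bergman} metrics. The paper closes this gap by choosing the specific path
\[
  \varphi_t=\log\frac{\abs{e^{\xi t}T}^2}{\abs{T}^2},\qquad e^\xi=\sigma,
\]
so that $\omega_t$ is itself a Bergman metric (for $e^{\xi t}\in G$) at every $t\in[0,1]$. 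With this choice the bracketed factor $c_k(\omega_t)\wedge\omega_t^{n-k}-\mu_k\omega_t^n$ is uniformly bounded by the preceding lemma, and the entire $\sigma$-growth is carried by the single factor
\[
  \abs{\dot\varphi_t}\leq \norm{\xi^*+\xi}_{\op}\leq \log\Tr(\sigma^*\sigma),
\]
which is exactly log-polynomial. Your proposal never specifies the path and hence cannot invoke the uniform curvature bound along it; without this, the argument does not go through as written.
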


\begin{proof}
  We study the asymptotic behavior in $\sigma$ of 
  \begin{align} \label{eq:higherKen}
    M_k(\sigma)
    &= -(n+1)(n-k+1)V \int_0^1\int_X \dot\varphi_t \left[c_k(\omega_t)\wedge \omega_t^{n-k}-\mu_k\omega_t^n\right] \,\d t 
  \end{align}
  by considering the particular path in $\calH_\omega$ given by
  \begin{align}
    \varphi_t &= \log\frac{\abs{e^{\xi t}T}^2}{\abs{T}^2}
  \end{align}
  where $\xi\in\sl(N+1,\C)$ satisfies $e^\xi=\sigma$. 
  With this path $\omega_t:=\omega+\I\ddbar\varphi_t$ is a Bergman metric for each $t\in[0,1]$. By the previous lemma the factor in brackets in Eq. (\ref{eq:higherKen}) is uniformly bounded in $\sigma$.
  Also
  \begin{align}
    \abs{\dot\varphi_t} (\sigma)
    &= \frac{\abs{\ip{e^{\xi t} T,(\xi^*+\xi)e^{\xi t} T}}}{\abs{e^{\xi t}T}^2} 
    \leq \norm{\xi^*+\xi}_{\op} 
    \leq \log\Tr(\sigma^*\sigma),
  \end{align}
  where $\norm{\;\cdot\;}_{\op}$ is the operator norm on matrices. The last inequality follows since the eigenvalues of $\sigma$ are the exponentials of the eigenvalues of $\xi$. This establishes the estimate (\ref{eq:theestimate}); Eq.\ \ref{eq:estresult} now follows from Proposition 4.2 in \cite{paul12}.
\end{proof}

This establishes Lemma \ref{lem:paul} for the Donaldson functionals corresponding to the higher K-energies.

\section{Discriminant Degrees}
\label{sec:discrdegrees}

The purpose of this section is to compute the degree of the $X$-hyperdiscriminant of format $(n-k)$. To accomplish this we use the following result of Beltrametti, Fania, and Sommese \cite{BFS92}. 
\begin{lem}[\cite{BFS92}] \label{lem:BFS92}
If  $X^n\hookrightarrow\CP^N$ is smooth, then 
$X^\vee$ is a hypersurface if and only if $c_n(J_1(\calO_X(1))) \neq 0$, where $J_1(\calO_X(1))$ is the bundle of 1-jets of the hyperplane bundle on $\CP^N$ restricted to $X$.
In this case
\be \label{eq:BFSdegformula}
  \deg(\Delta_X) = \int_X c_n(J_1(\calO_X(1))) .
\ee
\end{lem}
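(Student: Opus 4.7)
The plan is to realize $X^\vee$ as the image of a projective bundle over $X$ and compute its degree via the Thom--Porteous formula for a rank-$\leq 1$ degeneracy locus. First I would construct the conormal incidence variety: the $1$-jet evaluation map $\mathrm{ev}\colon H^0(X,\calO_X(1))\otimes\calO_X \to J_1(\calO_X(1))$ is surjective (since $|\calO_X(1)|$ is very ample), with kernel $K$ a vector bundle of rank $(N+1)-(n+1)=N-n$. A nonzero section $s\in H^0(X,\calO_X(1))$ cuts out a hyperplane $H_s\subset\CP^N$ tangent to $X$ at $p$ precisely when $j^1_p s = 0$, i.e. $s_p\in K_p$. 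Therefore the incidence variety $I_X := \{(p,[s])\in X\times(\CP^N)^\vee : \bbT_p X\subseteq H_s\}$ is naturally identified with $\P(K)$, a projective bundle over $X$ of relative dimension $N-n-1$, so $\dim I_X = N-1$. Since $X^\vee$ is the image of the second projection $\pi_2\colon I_X\to(\CP^N)^\vee$, it is a hypersurface if and only if $\pi_2$ is generically finite.

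Next I would compute $\deg(X^\vee)$ by intersecting with a generic line $\ell\subset(\CP^N)^\vee$, which corresponds to a generic $2$-dimensional subspace $V_\ell\subset H^0(X,\calO_X(1))$. The preimage $\pi_2^{-1}(\ell)$ is exactly the locus in $X$ where the composition
\[
\phi_\ell\colon V_\ell\otimes\calO_X \hookrightarrow H^0\otimes\calO_X \xrightarrow{\mathrm{ev}} J_1(\calO_X(1))
\]
drops rank from $2$ to $\leq 1$. Thom--Porteous gives expected codimension $(2-1)((n+1)-1)=n$, so this locus is zero-dimensional on $X^n$, and its class equals $c_n(J_1(\calO_X(1)) - V_\ell\otimes\calO_X) = c_n(J_1(\calO_X(1)))$, the triviality of $V_\ell\otimes\calO_X$ eliminating the correction term.

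Combining the two steps: if $c_n(J_1(\calO_X(1)))=0$ then $\phi_\ell$ is everywhere injective, so $\pi_2$ has positive-dimensional generic fibers and $X^\vee$ is not a hypersurface; if this class is nonzero, the degeneracy locus is a nonempty finite scheme of length $\int_X c_n(J_1(\calO_X(1)))$, which therefore equals $\deg(X^\vee)$. The main obstacle is verifying that $\pi_2\colon I_X\to X^\vee$ is birational, so that the intersection count computes $\deg(\Delta_X)$ and not a multiple thereof. This is the classical reflexivity theorem $(X^\vee)^\vee = X$, valid for smooth $X$ in characteristic zero: a generic smooth point of $X^\vee$ corresponds to a tangent hyperplane that touches $X$ at a single reduced point. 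A standard Bertini-type argument for the base-point-free system $|\calO_X(1)|$ then ensures transversality of the rank drop of $\phi_\ell$ for generic $V_\ell$, so the Porteous class computes the correct intersection number with no excess contribution, yielding $\deg(\Delta_X) = \int_X c_n(J_1(\calO_X(1)))$.
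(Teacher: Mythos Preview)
The paper does not prove this lemma; it is quoted as a known result from \cite{BFS92} and used as a black box. Your proposed argument is correct and is essentially the standard proof one finds in the projective duality literature (including the cited reference): realize the conormal variety as $\P(K)$ for $K=\ker\bigl(H^0\otimes\calO_X\to J_1(\calO_X(1))\bigr)$, intersect with a generic pencil, and invoke Thom--Porteous together with reflexivity.

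One small imprecision worth tightening: when $c_n(J_1(\calO_X(1)))=0$ you assert that $\phi_\ell$ is everywhere injective. What you actually need (and what is true) is that for a \emph{generic} line $\ell$ the degeneracy locus is empty. The clean way to see this is the contrapositive you already set up: if $X^\vee$ is not a hypersurface then a generic line $\ell\subset(\CP^N)^\vee$ misses $X^\vee$ entirely, so $\pi_2^{-1}(\ell)=\varnothing$ and hence the degeneracy locus of $\phi_\ell$ is empty, forcing $\int_X c_n(J_1)=0$. Conversely, if $X^\vee$ is a hypersurface, reflexivity gives birationality of $\pi_2$ onto $X^\vee$, and then your Porteous count yields $\deg(\Delta_X)=\int_X c_n(J_1)>0$. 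With that adjustment the argument is complete.
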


In the case of the $X$-hyperdiscriminant of format $(n-k)$ 
the integral becomes
\be \label{eq:degreeintegral}
\deg\left(\Delta_X^{(n-k)}\right) = \int_{X\times \CP^{n-k}} s^* c_{2n-k}(J_1(\calO_{s(X\times\CP^{n-k})}(1))),
\ee 
where $s:X\times \CP^{n-k}\hookrightarrow \CP^\ell$, $\ell=(n+1)(n-k+1)-1$, denotes the Segre embedding. To compute this integral, our strategy will be to split Chern classes up until each factor is supported either on $X$ or on $\CP^{n-k}$. This is accomplished by the following.

\begin{lem} \label{lem:JettopChern}
We have
\be \label{eq:JettopChern}
  c_{2n-k}(J_1)
  = \sum_{i=0}^{k} (-1)^i (n-i+1)\binom{n-i}{n-k}  c_i(\omega)\wedge \omega^{n-i} \wedge \omega_{FS}^{n-k} 
\ee
where
\begin{align}
  J_1 
  &:= J_1(\calO_{s(X\times \CP^{n-k})}(1)) &
  \omega
  &:= \pr_1^*\omega=\pr_1^*c_1(\calO_X(1)) \\
  c_i(\omega)
  &:= \pr_1^*c_i(T^{1,0}_X)
  = (-1)^i\pr_1^*c_i(\Omega_X^{1,0}) &
  \omega_{FS}
  &:= \pr_2^*\omega_{FS}
  = \pr_2^* c_1(\calO_{\CP^{n-k}}).
\end{align}
\end{lem}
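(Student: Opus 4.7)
The plan is to combine the tautological jet short exact sequence with the Euler sequence on $\CP^{n-k}$ to reduce the computation to a single Chern-root expansion on $\Omega^1_X$, after which the desired identity follows by extracting one cohomological bi-degree. Abbreviate $Y := X \times \CP^{n-k}$ and $L := \calO_Y(1,1) = \pr_1^*\calO_X(1) \otimes \pr_2^*\calO_{\CP^{n-k}}(1)$, so that $c_1(L) = \omega + \omega_{FS}$. The starting point is the jet sequence
\[
  0 \to \Omega^1_Y \otimes L \to J_1 \to L \to 0,
\]
which gives $c(J_1) = (1+\omega+\omega_{FS})\,c(\Omega^1_Y \otimes L)$. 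Since $Y$ is a product, $\Omega^1_Y = \pr_1^*\Omega^1_X \oplus \pr_2^*\Omega^1_{\CP^{n-k}}$, so the second factor splits multiplicatively.

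The main simplification comes from the Euler sequence on $\CP^{n-k}$, which after twisting by $\calO(1)$, pulling back to $Y$, and tensoring with $\pr_1^*\calO_X(1)$ reads
\[
  0 \to \pr_2^*\Omega^1_{\CP^{n-k}} \otimes L \to \pr_1^*\calO_X(1)^{\oplus (n-k+1)} \to L \to 0.
\]
Taking total Chern classes yields $c\!\bigl(\pr_2^*\Omega^1_{\CP^{n-k}} \otimes L\bigr) = (1+\omega)^{n-k+1}/(1+\omega+\omega_{FS})$. Multiplying by the leading factor $c(L) = 1+\omega+\omega_{FS}$ from the jet sequence cancels the denominator, producing the compact identity
\[
  c(J_1) = (1+\omega)^{n-k+1}\,c\!\bigl(\pr_1^*\Omega^1_X \otimes L\bigr).
\]

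Next I expand using the formula $c(\Omega^1_X \otimes L) = \sum_{p=0}^n (-1)^p c_p(\omega)\,(1+\omega+\omega_{FS})^{n-p}$, which comes from evaluating $\prod_{j=1}^n\!\bigl((1+\omega+\omega_{FS}) - x_j\bigr)$ at the Chern roots $x_j$ of $T^{1,0}_X$ and using $c_p(\Omega^1_X) = (-1)^p c_p(\omega)$. Substituting,
\[
  c(J_1) = (1+\omega)^{n-k+1}\sum_{p=0}^n (-1)^p c_p(\omega)\,(1+\omega+\omega_{FS})^{n-p}.
\]

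Finally, I extract the degree-$(2n-k)$ component in cohomology on $Y$. By K\"unneth combined with the vanishings $\omega^{n+1} = 0$ on $X$ and $\omega_{FS}^{n-k+1} = 0$ on $\CP^{n-k}$, the only surviving bi-degree places exactly $\omega_{FS}^{n-k}$ on the $\CP^{n-k}$ factor and total degree $n$ on the $X$ factor. The $\omega_{FS}^{n-k}$-coefficient of $(1+\omega+\omega_{FS})^{n-p}$ is $\binom{n-p}{n-k}(1+\omega)^{k-p}$, which vanishes for $p>k$ and forces the sum to truncate at $k$. Multiplying by the $(1+\omega)^{n-k+1}$ prefactor gives $(1+\omega)^{n-p+1}$, whose $\omega^{n-p}$-coefficient is $\binom{n-p+1}{n-p} = n-p+1$. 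Collecting and renaming $p \to i$ yields exactly~(\ref{eq:JettopChern}). The only real obstacle is the double binomial-coefficient bookkeeping in this last step; every earlier step is essentially forced by the product structure of $\Omega^1_Y$ and the Euler sequence.
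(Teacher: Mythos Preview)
Your argument is correct and follows essentially the same route as the paper: both reduce, via the jet sequence, the product splitting of $\Omega^1_Y$, and the (twisted) Euler sequence on $\CP^{n-k}$, to the identity $c(J_1)=(1+\omega)^{n-k+1}\,c(\pr_1^*\Omega^1_X\otimes L)$ and then extract the top component. Your final extraction is organized a bit more cleanly than the paper's---writing $c(\Omega^1_X\otimes L)=\sum_p(-1)^p c_p(\omega)(1+\omega+\omega_{FS})^{n-p}$ and reading off the $\omega_{FS}^{n-k}$ and $\omega^{n-p}$ coefficients directly avoids the paper's quadruple sum over $p,i,j,q$---but the underlying computation is the same.
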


\begin{proof}

\noindent\textbf{Bundle Factorization Formulas.} 

\noindent\emph{Smooth} Euler Splitting:
\begin{align}
  \bigoplus^{k+1} \calO_{\CP^k}(-1) &\cong \Omega^{1,0}_{\CP^k} \oplus \calO_{\CP^k}  
\end{align}
Jet Bundle Sequence: for any holomorphic line bundle $L\to X$
\begin{align}
  \SelectTips{cm}{}
  \xymatrix{
    0 \ar[r] & \Omega^{1,0}_X \otimes L \ar[r] & J_1(L) \ar[r] & L \ar[r] & 0
  } 
\end{align}
Segre Factorization: setting $s^*\calO_{s(X\times\CP^{n-k})}(1):=s^*\calO_{\CP^\ell}(1)|_{s(X\times\CP^{n-k})}$ and $\calO_X(1):=\calO_{\CP^N}(1)|_X$
\begin{align}
  s^*\calO_{s(X\times\CP^{n-k})}(1) 
  &\cong \pr_1^*\left(\calO_X(1)\right) \otimes \pr_2^*\left(\calO_{\CP^{n-k}}(1)\right) 
\end{align}
(Holomorphic) Base Product Splitting:
\begin{align}
  s^*\Omega^{1,0}_{s(X\times\CP^{n-k})}(1) 
  &\cong \left(\pr_1^*\Omega^{1,0}_X(1) \otimes \pr_2^*\calO_{\CP^{n-k}}(1)\right) \oplus \left( \pr_1^*\calO_X(1) \otimes \pr_2^*\Omega^{1,0}_{\CP^{n-k}}(1) \right) 
\end{align}
Twisted Smooth Euler Splitting:
\begin{align}
  \bigoplus^{k+1} \pr_1^*\calO_X(1) 
  &\cong \left(\pr_1^*\calO_X(1) \otimes \pr_2^*\Omega^{1,0}_{\CP^k}(1)\right) \oplus \left(\pr_1^*\calO_X(1) \otimes \pr_2^*\calO_{\CP^k}(1) \right)  
\end{align}
\bigskip

By the jet bundle sequence for 
\begin{align}
  L &=\calO_{s(X\times \CP^{n-k})}(1):=\calO_{\CP^\ell}(1)|_{s(X\times \CP^{n-k})} \\
  J_1\! :\!\! &= J_1(L) = J_1(\calO_{s(X\times \CP^{n-k})}(1))
\end{align}
over $s(X\times\CP^{n-k})$, the total Chern class of the jet bundle is
\begin{align}
  s^*c(J_1) 
  &= s^*c\left(\Omega^{1,0}_{s(X\times\CP^{n-k})}\otimes \calO_{s(X\times\CP^{n-k})}(1)\right) \wedge 
  s^*c\left(\calO_{s(X\times\CP^{n-k})}(1)\right) \\
  &= s^*c\left(\Omega^{1,0}_{s(X\times\CP^{n-k})}(1)\right) \wedge 
  s^*c\left(\calO_{s(X\times\CP^{n-k})}(1)\right) \\
  &= c\left(s^*\Omega^{1,0}_{s(X\times\CP^{n-k})}(1)\right) \wedge 
  c\left(s^*\calO_{s(X\times\CP^{n-k})}(1)\right). 
\end{align}
Applying the holomorphic base product splitting to the first factor and the Segre factorization to the second factor, we see that
\begin{align}
  s^*c(J_1) 
  &= c\left(\left(\pr_1^*\Omega^{1,0}_X(1) \otimes \pr_2^*\calO_{\CP^{n-k}}(1)\right) \oplus \left( \pr_1^*\calO_X(1) \otimes \pr_2^*\Omega^{1,0}_{\CP^{n-k}}(1) \right)\right) \\
  & \hspace{2cm} \wedge c\left(\pr_1^*\left(\calO_X(1)\right) \otimes \pr_2^*\left(\calO_{\CP^{n-k}}(1)\right)\right) \\
  &= c\left(\pr_1^*\Omega^{1,0}_X(1) \otimes \pr_2^*\calO_{\CP^{n-k}}(1)\right) \\ 
  & \hspace{1cm} \wedge c\left( \left( \pr_1^*\calO_X(1) \otimes \pr_2^*\Omega^{1,0}_{\CP^{n-k}}(1) \right) 
  \oplus \left(\pr_1^*\left(\calO_X(1)\right) \otimes \pr_2^*\left(\calO_{\CP^{n-k}}(1)\right)\right)\right) ,
\end{align}
where we used the Whitney product formula in the second equality.
By the smooth Euler splitting this becomes
\begin{align}
  s^*c(J_1) 
  &= c\left(\pr_1^*\Omega^{1,0}_X(1) \otimes \pr_2^*\calO_{\CP^{n-k}}(1)\right) 
  \wedge c\left(\bigoplus^{n-k+1} \pr_1^*\calO_X(1)\right) \\
  &= c\left(\pr_1^*(\Omega^{1,0}_X \otimes \calO_X(1)) \otimes \pr_2^*\calO_{\CP^{n-k}}(1)\right) 
  \wedge c\left(\pr_1^*\calO_X(1)\right)^{n-k+1}  \\
  &= c\left(\pr_1^*\Omega^{1,0}_X \otimes \left(\pr_1^*\calO_X(1) \otimes \pr_2^*\calO_{\CP^{n-k}}(1)\right) \right)
  \wedge c\left(\pr_1^*\calO_X(1)\right)^{n-k+1}  .
\end{align}
To obtain $p$th Chern classes, we apply the general formula 
\be
  c_p(E\otimes L) = \sum_{i=0}^p \binom{r-i}{p-i} c_i(E)\wedge c_1(L)^{p-i},
\ee
where $E$ is a rank $r$ vector bundle, $L$ is a line bundle, and $0\leq p\leq r$ is an integer. Taking $E=\pr_1^*\Omega_X^{1,0}$ and $L=\pr_1^*\calO_X(1) \otimes \pr_2^*\calO_{\CP^{n-k}}(1)$, it follows that
\begin{align}
  s^*c(J_1)  
  &= \sum_{p=0}^n \sum_{i=0}^p \binom{n-i}{p-i} 
  c_i\!\left(\pr_1^*\Omega^{1,0}_X\right) \wedge 
  c_1\!\left(\pr_1^*\calO_X(1)\otimes \pr_2^*\calO_{\CP^{n-k}}(1)\right)^{p-i} 
  \wedge c\left(\pr_1^*\calO_X(1)\right)^{n-k+1} \\
  &= \sum_{p=0}^n \sum_{i=0}^p \binom{n-i}{p-i} 
  (-1)^i c_i(\omega) \wedge \left( \omega + \omega_{FS} \right)^{p-i} 
  \wedge (1+\omega)^{n-k+1}  \\
  &= \sum_{p=0}^n \sum_{i=0}^p \sum_{j=0}^{p-i} \sum_{q=0}^{n-k+1} \binom{n-i}{p-i} \binom{p-i}{j} \binom{n-k+1}{q} (-1)^i c_i(\omega) \wedge \omega^{p+q-j-i} \wedge \omega_{FS}^{j}.
\end{align}

We are now ready to compute $c_{2n-k}(J_1)$. When $j=n-k$ and $p+q-j=n$, it follows that 
$q=2n-k-p$. Then $p\leq n$ implies $q\geq n-k$ so that $q\in\set{n-k,n-k+1}$. This, in turn, implies that $p\in\set{n-1,n}$. Also $j\leq p-i$ implies $i\leq p-j=p-(n-k)$.
Thus
\begin{align}
  c_{2n-k}(J_1) 
  &= \sum_{p=n-1}^n \sum_{i=0}^{p-(n-k)}  \sum_{q=n-k}^{n-k+1} (-1)^i \binom{n-i}{p-i} \binom{p-i}{n-k} \binom{n-k+1}{q} 
  \, c_i(\omega) \wedge \omega^{p+q-n+k-i} \wedge \omega_{FS}^{n-k} \\
  &= \sum_{p=n-1}^n \sum_{i=0}^{p-(n-k)}  (-1)^i \binom{n-i}{p-i} \binom{p-i}{n-k} 
  \, c_i(\omega) \wedge \left[ (n-k+1)\omega^{p-i}+\omega^{p-i+1}\right] \wedge \omega_{FS}^{n-k} \\
  &= \sum_{i=0}^{k-1}  (-1)^i \left[(n-k+1) \binom{n-i}{n-k}+(n-i)\binom{n-i-1}{n-k} \right] 
  \, c_i(\omega) \wedge \omega^{n-i} \wedge \omega_{FS}^{n-k} \notag \\
  & \qquad 
+ (-1)^k (n-k+1) \, c_k(\omega) \wedge \omega^{n-k} \wedge \omega_{FS}^{n-k} .
\end{align}
A quick calculation shows that 
\be (n-k+1) \binom{n-i}{n-k}+(n-i)\binom{n-i-1}{n-k} = (n-i+1)\binom{n-i}{n-k}. 
\ee
\end{proof}


\begin{lem} \label{lem:degrees}
Let $\Delta_{X}^{(n-k)}$ denote the $X$-hyperdiscriminant of format $(n-k)$ and $\mu_k$ be as in Eq.\ (\ref{eq:muk}). Then the degree of $\Delta_X^{(n-k)}$ is given by
 \begin{align}
   \deg\left(\Delta_{X}^{(n-k)}\right) 
   &= \deg(X) \sum_{i=0}^k (-1)^i (n-i+1) \binom{n-i}{n-k}\, \mu_i \label{eq:discrdeg} .
 \end{align}
\end{lem}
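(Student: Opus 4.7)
The plan is to combine Lemma \ref{lem:BFS92}, applied to the Segre-embedded product $s(X\times\CP^{n-k})$, with the Chern-class identity of Lemma \ref{lem:JettopChern}, and then evaluate termwise by Fubini on $X\times\CP^{n-k}$. Since the existence of $\Delta_X^{(n-k)}$ is precisely the statement that the dual variety of $s(X\times\CP^{n-k})$ is a hypersurface, Lemma \ref{lem:BFS92} yields equation (\ref{eq:degreeintegral}), namely
\be
  \deg\left(\Delta_X^{(n-k)}\right) = \int_{X\times\CP^{n-k}} s^* c_{2n-k}(J_1).
\ee

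Next I would substitute the explicit expansion from Lemma \ref{lem:JettopChern}, which presents $s^* c_{2n-k}(J_1)$ as a linear combination of product classes $c_i(\omega)\wedge \omega^{n-i}\wedge \omega_{FS}^{n-k}$ with coefficients $(-1)^i(n-i+1)\binom{n-i}{n-k}$. In each summand the first two factors are pullbacks from $X$ via $\pr_1$ and the third is a pullback from $\CP^{n-k}$ via $\pr_2$, so Fubini factorizes the integral as
\be
  \int_{X\times\CP^{n-k}} c_i(\omega)\wedge\omega^{n-i}\wedge\omega_{FS}^{n-k}
  = \left(\int_X c_i(T_X^{1,0})\wedge\omega^{n-i}\right)\cdot\left(\int_{\CP^{n-k}} \omega_{FS}^{n-k}\right).
\ee

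Under the standard normalization $\int_{\CP^{n-k}}\omega_{FS}^{n-k}=1$, and by the definition (\ref{eq:muk}) the remaining $X$-integral equals $V\mu_i$. Since $X\subset\CP^N$ is polarized by $\calO_X(1)$ via a complete linear system, one has $V=\int_X\omega^n=\deg(X)$. Plugging these values back into the sum produced by Lemma \ref{lem:JettopChern} gives exactly the claimed identity (\ref{eq:discrdeg}).

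The genuine work has already been done in Lemmas \ref{lem:BFS92} and \ref{lem:JettopChern}, so I expect no real obstacle in completing this proof; it is essentially a Fubini integration over a product together with the standard identification $V=\deg(X)$. The one point worth verifying carefully is that the normalizations of $\omega$ and $\omega_{FS}$ are compatible across the two factors, so that $\int_{\CP^{n-k}}\omega_{FS}^{n-k}=1$ holds simultaneously with $\int_X\omega^n=\deg(X)$; this is purely a matter of being consistent with the conventions of Section \ref{sec:notation} (taking $\omega=c_1(\calO_X(1))$ and $\omega_{FS}$ as the restricted Fubini-Study form on the $\CP^{n-k}$ factor).
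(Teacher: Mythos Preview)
Your proposal is correct and follows essentially the same route as the paper: the paper's proof is the one-line remark that the result follows immediately from Eqs.\ (\ref{eq:muk}), (\ref{eq:BFSdegformula}), and (\ref{eq:JettopChern}), which is exactly the Fubini-on-the-product argument you spell out. Your added caveat about normalizations is well taken but causes no trouble, since Lemma \ref{lem:JettopChern} already defines $\omega=\pr_1^*c_1(\calO_X(1))$ and $\omega_{FS}=\pr_2^*c_1(\calO_{\CP^{n-k}})$, giving $\int_X\omega^n=\deg(X)$ and $\int_{\CP^{n-k}}\omega_{FS}^{n-k}=1$.
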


\begin{proof}
Follows immediately from Eqs.\ (\ref{eq:muk}), (\ref{eq:BFSdegformula}), and (\ref{eq:JettopChern}).
\end{proof}

\section{Relations among Discriminants and Higher K-Energies} \label{sec:higherKen}

\begin{lem} \label{lem:lognorm}
Let $X\hookrightarrow\CP^N$ be a smooth, linearly normal $n$-dimensional subvariety. Assume that $\delta(X)\leq n- k$, where $\delta(X)$ is the dual defect of $X$. Then there is a continuous norm $\norm{\,\cdot\,}$ on the vector space of degree $d_k^\vee := \deg(\Delta_X^{(n-k)})$ polynomials on $(\C^{N+1}\otimes\C^{n-k+1})^\vee$ such that for all $\sigma\in\SL(N+1,\C)$, we have
\begin{align}
  \log \frac{\norm{\sigma\cdot\Delta_X^{(n-k)}}^2}{\norm{\Delta_X^{(n-k)}}^2} 
  &= \sum_{i=0}^{k} (-1)^i (n-i+1) \binom{n-i}{n-k} \int_0^1 \int_{X} \dot\varphi_t \, c_i(\omega_t) \wedge \omega_t^{n-i}\wedge \d t ,\label{eq:logdiscr}
\end{align}
where $e$ denotes the identity in $\SL(N+1,\C)$.
\end{lem}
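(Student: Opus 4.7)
The strategy is to apply Lemma \ref{lem:paul} to the Segre embedding $s:X\times\CP^{n-k}\hookrightarrow\CP^\ell$ and then recognize the resulting Donaldson functional as the claimed integral via the Chern class decomposition from Lemma \ref{lem:JettopChern}. The hypothesis $\delta(X)\leq n-k$ forces $\tilde{X}:=s(X\times\CP^{n-k})$ to be dually nondegenerate, so that its defining polynomial is precisely $\Delta_X^{(n-k)}$. Since $\tilde{X}$ has complex dimension $2n-k$, and Lemma \ref{lem:estimate} supplies the log-polynomial growth required to run Paul's $\ddbar$-argument, Lemma \ref{lem:paul} (applied with $\tilde X$ in place of $X$) yields
\[
\pm\, D_{J_1(\calO_{\tilde{X}}(1))^\vee}(c_{2n-k+1};H(\sigma),H(e)) \;=\; \log\frac{\norm{\sigma\cdot\Delta_X^{(n-k)}}^2}{\norm{\Delta_X^{(n-k)}}^2},
\]
with sign determined by the parity of $2n-k+1$.

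Next I would expand the Donaldson functional via the Bott-Chern transgression along the path $\varphi_t=\log(|e^{\xi t}T|^2/|T|^2)$, with $e^\xi=\sigma$, writing it up to an overall numerical factor and sign as
\[
\int_0^1\!\!\int_{\tilde{X}} \dot\varphi_t\, c_{2n-k}(J_1(\calO_{\tilde{X}}(1)))\,dt.
\]
The key simplification is that $\SL(N+1,\C)$ acts on $\tilde X$ only through the first factor, so $\dot\varphi_t$ is pulled back from $X$ and carries no contribution from the $\CP^{n-k}$ factor. Substituting the explicit expression for $c_{2n-k}(J_1)$ from Lemma \ref{lem:JettopChern}, applying Fubini, and using $\int_{\CP^{n-k}}\omega_{FS}^{n-k}=1$, the integral collapses term-by-term to
\[
\sum_{i=0}^{k}(-1)^i(n-i+1)\binom{n-i}{n-k}\int_0^1\!\!\int_X\dot\varphi_t\, c_i(\omega_t)\wedge\omega_t^{n-i}\,dt,
\]
which, after tracking the overall sign, matches the right-hand side of the statement.

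The main obstacle will be the transgression step: one must verify that the Bott-Chern form for $c_{2n-k+1}$ on $J_1(\calO_{\tilde X}(1))^\vee$, along the path of metrics $H(e^{\xi t})$, genuinely factors as $\dot\varphi_t\wedge c_{2n-k}(J_1)$ with the correct overall constant. This rests on compatibility between the induced Hermitian metrics on the dual jet bundle and the jet bundle short exact sequence $0\to \Omega^1\otimes L\to J_1(L)\to L\to 0$, together with the fact that only the $X$-direction of $\tilde X$ deforms under $\sigma$. Once this factorization is in hand, the rest of the argument is a direct substitution of Lemma \ref{lem:JettopChern} followed by Fubini; the sign bookkeeping and the cancellation of the leading combinatorial constant from the transgression against the factor in front of Lemma \ref{lem:JettopChern} should be routine.
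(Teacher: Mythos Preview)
Your approach is essentially identical to the paper's: apply Lemma~\ref{lem:paul} to $\tilde X = s(X\times\CP^{n-k})$ (using Lemma~\ref{lem:estimate} for the required log-polynomial growth), identify the resulting Donaldson functional with $\int_0^1\int_{\tilde X}\dot\varphi_t\,c_{2n-k}(J_1)\,\d t$, then substitute Lemma~\ref{lem:JettopChern} and integrate out the $\CP^{n-k}$ factor via Fubini. The transgression identity you flag as the ``main obstacle'' is not reproved in the paper either---it is imported directly from \cite{paul12} (equations (5.50) and (5.52) there), so you may cite it rather than rederive it.
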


\begin{proof}
  Combining equations (5.50) and (5.52) in \cite{paul12} we see that
\be
 D_{J_1(\calO_X(1))^\vee}(c_{n+1};H(\sigma),H(e))
 =
 (-1)\int_0^1 \int_X \dot\varphi_t c_n(J_1(\calO(1)|X)^\vee;h_t)\,\d t
\ee
one the one hand; on the other hand by the Main Lemma (p.\ 276 \emph{ibid.}), which we have extended to the higher K-energies in Lemma \ref{lem:estimate},
\be
 D_{J_1(\calO_X(1))^\vee}(c_{n+1};H(\sigma),H(e))
 =
 (-1)^{n+1} \log\frac{\norm{\sigma\cdot\Delta_X}^2}{\norm{\Delta_X}^2} .
\ee
Hence,
\begin{align}
  \log\frac{\norm{\sigma\cdot\Delta_X}^2}{\norm{\Delta_X}^2} 
  &= (-1)^n \int_0^1 \int_X \dot\varphi_t c_n(J_1(\calO(1)|_X)^\vee;h_t)\,\d t \\
  &= \int_0^1 \int_X \dot\varphi_t c_n(J_1(\calO(1)|_X);h_t)\,\d t  \label{eq:donaldsonlog}.
\end{align}
By Lemma \ref{lem:JettopChern} it follows that
\begin{align}
  \log \frac{\norm{\sigma\cdot\Delta_X^{(n-k)}}^2}{\norm{\Delta_X^{(n-k)}}^2} 
  &= \sum_{i=0}^{k} (-1)^i (n-i+1)\binom{n-i}{n-k} \int_0^1 \int_{X\times\CP^{n-k}} \dot\varphi_t \, c_i(\omega_t) \wedge \omega_t^{n-i}\wedge \omega_{FS}^{n-k} \wedge \d t \\
  &= \sum_{i=0}^{k} (-1)^i (n-i+1) \binom{n-i}{n-k} \int_0^1 \int_{X} \dot\varphi_t \, c_i(\omega_t) \wedge \omega_t^{n-i}\wedge \d t .\label{eq:logdiscr}
\end{align}
\end{proof}

\begin{Thm} \label{thm:mainformula}
Under the hypotheses of Lemma \ref{lem:lognorm}, we have
  \be
  M_k(\sigma)
  = \sum_{i=1}^k (-1)^{i+1} \binom{n-i}{n-k} \left[ \deg\left(R_X\right) \lognorm{\Delta_X^{(n-i)}} - \deg\left(\Delta_{X}^{(n-i)}\right) \lognorm{R_X} \right].
  \ee
\end{Thm}

\begin{proof}
First, note that for each $n\geq 0$ and $k\leq n$, the linear system
 \begin{align}
   Y_j &= \sum_{i=0}^j \binom{n-i}{n-j} X_i, \qquad j=0,1,\ldots,k
 \end{align}
 has the solution
 \begin{align}
   X_j &= \sum_{i=0}^j (-1)^{i+j} \binom{n-i}{n-j} Y_i, \qquad j=0,1,\ldots,k.
 \end{align}
When applied to Eqs.(\ref{eq:discrdeg}) and (\ref{eq:logdiscr}), this gives, respectively,
 \begin{align} 
   \mu_k
   &= \frac{1}{n-k+1} \sum_{i=0}^k (-1)^i \binom{n-i}{n-k} \frac{\deg\left(\Delta_{X}^{(n-i)}\right)}{ \deg(X) } \label{eq:mukbydiscrim} \\
   \int_0^1 \int_{X} \dot\varphi_t \, c_k(\omega_t) \wedge \omega_t^{n-k}\wedge \d t 
  &= \frac{1}{n-k+1} \sum_{i=0}^{k} (-1)^i \binom{n-i}{n-k}  \lognorm{\Delta_X^{(n-i)}} \label{eq:intbydiscrim}.
 \end{align}
Applying Eqs.(\ref{eq:mukbydiscrim}) and (\ref{eq:intbydiscrim}) to Eq.(\ref{eq:genKenergy}) gives the result.

\end{proof}

\begin{remark}
It is interesting that the $X$-hyperdiscriminants $\Delta_X^{(n-i)}$ of format $(n-i)$, $i=0,\ldots,k$ are collectively responsible for encoding the presence of the $k$th Chern form in $M_k$.

\end{remark}

\begin{proof}[Proof of Theorem \ref{thm:mainthm}]
The theorem now follows directly from Theorem \ref{thm:mainformula}, after gathering even and odd powers of $(-1)$. 
Explicitly, the vectors are  
\begin{align}
 v 
 &= R_X^{\sum_{j=1}^{\left\lfloor\frac{k}{2}\right\rfloor}\binom{n-2j}{n-k}d^\vee_{2j}} \otimes \displaystyle\bigotimes_{j=1}^{\left\lceil\frac{k}{2}\right\rceil} \left(\Delta_X^{(n-2j+1)} \notag\right)^{\binom{n-2j+1}{n-k}d_0^\vee} \\
 w
 &= R_X^{\sum_{j=1}^{\left\lceil\frac{k}{2}\right\rceil}\binom{n-2j+1}{n-k}d^\vee_{2j-1}} \otimes \displaystyle\bigotimes_{j=1}^{\left\lfloor\frac{k}{2}\right\rfloor} \left(\Delta_X^{(n-2j)}\right)^{\binom{n-2j}{n-k}d_0^\vee},
\end{align}
where $d^\vee_i := \deg\left(\Delta_X^{(n-i)}\right)$.
 We regard the polynomials $R_X^r$ and $\left(\Delta_X^{(n-i)}\right)^r$ as vectors in the irreducible $\SL(N+1,\C)$-modules 
\begin{align}
  R_X^r &\in \C_{rd_0^\vee}[M_{(n+1)\times (N+1)}]^{\SL(n+1,\C)} \\
  \left(\Delta_X^{(n-i)}\right)^r &\in \C_{rd_k^\vee}[M_{(n-i+1)\times (N+1)}]^{\SL(n-i+1,\C)} 
\end{align} 
for $r$ a positive integer and $i=1,2,\ldots ,k$, $\delta(X)\leq n-k$. The $\SL(N+1,\C)$-modules $V$ and $W$ are then the appropriate tensor product modules containing $v$ and $w$, respectively.
\end{proof}

\begin{remark}
From Lemmas \ref{lem:degrees} and \ref{lem:lognorm}, we have a recursion relation
\begin{align}
   M_k(\sigma) 
   &= (-1)^{k+1} \left[\deg\left(R_X\right) \lognorm{\Delta_{X}^{(n-k)}} - \deg\left(\Delta_{X}^{(n-k)}\right) \lognorm{R_X} + \sum_{i=1}^{k-1} (-1)^i \binom{n-i}{n-k} M_i(\sigma) \right].
\end{align}
\end{remark}

\begin{remark}
When $k=1$ we recover formula (1.1) in Theorem A in \cite{paul12}:
  \be
    M_1(\sigma) = \deg\left(R_X\right) \lognorm{\Delta_{X}^{(n-1)}} - \deg\left(\Delta_{X}^{(n-1)}\right) \lognorm{R_X} .
  \ee
  In this case $V$ and $W$ are \emph{irreducible}; in contrast, for $k>1$, $V$ and $W$ may no longer be irreducible.
\end{remark}

Corollary \ref{cor:asymptotics} now follows from the asympototic expansions (\cite{paul12} p.268)
\begin{align}
  \lim_{\abs{t}\too 0} \log \norm{\lambda(t) v}^2 &= w_\lambda(v) \log\abs{t}^2 + O(1) \\
  \lim_{\abs{t}\too 0} \log \norm{\lambda(t) w}^2 &= w_\lambda(w) \log\abs{t}^2 + O(1), 
\end{align}
where $v\in V$, $w\in W$, and $w_\lambda(v)$, $w_\lambda(w)$ are the weights of $\lambda$ on $v$ and $w$, respectively. 

Corollary \ref{cor:boundedness} follows from the general formula (\cite{paul13} p.18 Lemma 4.1)
\be
  \lognorm{v}-\lognorm{w} = \log \tan^2 d_g(\sigma\cdot[(v,w)],\sigma\cdot[(v,0)]),
\ee
where $d_g$ denotes the distance in the Fubini-Study metric on $\P(V\oplus W)$, and the numerical criterion established in \cite{paul12a}.

\bibliographystyle{alpha}

\bibliography{/Users/admin/Dropbox/LaTeX_Header_FIles/KahlerGeometryBibItems}

\end{document}